\newtheorem{lemma}{Lemma}
\newtheorem{theorem}{Theorem}
\begin{document}
\setlength{\footskip}{1in}
\renewcommand{\headrulewidth}{0pt}
% April 2019
\vspace{\baselineskip}

\vspace{\baselineskip}

\begin{center}

\section*{Goldbach and Twin Prime Pairs: A Sieve Method to Connect the Two}

\vspace{\baselineskip}

\fontsize{8}{10}\selectfont 
Tom Milner-Gulland
\vspace{\baselineskip}

\fontsize{8pt}{10pt}\selectfont 
\end{center}
\vspace{\baselineskip}

% Subject Classification: Additive Number Theory: 1180
% OR SINCE 2010  	11P32   is	Goldbach-type theorems; other additive questions involving primes

%\begin{abstract}
\section*{Abstract}
This paper proposes, and demonstrates the efficacy of, a method for establishing a lower bound for cardinalities of selected sets of twin primes, and shows that the proof employed may be modified for selected sets of Goldbach pairs. Our sieve method is centred on the restrictive properties of intervals, specifically regarding divisibility distributions. We implicitly use the Chinese Remainder Theorem by way of the use of the midpoint in our intervals, and consider the sieve of Eratosthenes in such a way as to find a set of primes whose distribution is mirror-symmetrical about that midpoint. Bounds are established through the use of the formulae closely associated with the Prime Number theorem and the Mertens theorem. We show that the Goldbach conjecture is true if the Riemann hypothesis is true.
%\end{abstract}
\vspace{\baselineskip}

%\keywords{
Keywords: Goldbach pairs, twin primes, prime pairs, Goldbach and Twin Primes equivalence,
%Mertens Theorem, 
Euler totient, divisibility distributions, mirror symmetry, folded number scale.
%}
%\ccode{Mathematics Subject Classification 2000: 11P32}

\section*{Introduction}	

%Throughout this paper, $\mathbb{S}$ will be the set of non-negative integers. For a positive real number $x$, we will denote by $\pi(x)$ the number of primes not exceeding $x$. 
Throughout this paper, for any set $S$ that is explicitly stated to be an 'interval' or is written $[x,y]$ for some real $x$ and $y$, $S$ is to be taken to be a nonempty set of integers; $\mathbb{N}$ will be the set of non-negative integers; $\pi(x)$ will the number of primes not exceeding $x$; $p_{n}$ for $n = 1, 2,\ldots$ will be the sequence of primes; $\phi$ will be the Euler totient; for any finite subset $K$ of $\mathbb{N}$ and any integer $i$, $[K]^i$ will be the set of all subsets, $M$, of $K$ for which $|M|=i$. For any real $a$ and $b$, $(a, b)$ will denote an ordered pair unless written $(a, b) = 1$ or $(a, b) \ne 1$, which will denote that the greatest common denominator is one and not one respectively. 
%; for any set $A$,
% and $A'$, $A \oplus A'$ will be $(A \setminus A') \cup (A' \setminus A)$; 
%$|A|$ will be the cardinality of $A$ and $\phi$ will be the the Euler totient.
Finally, $P(n)$ will be $\{p_{1}, p_{2},\ldots, p_{n}\}$.

For any integer $i$, we begin with the interval $[1, i]$ and any $i$-element interval, $I$. We study the number of elements of our relevant set, $T_I$, of integers (i.e. the union of $I$ and another set of integers, which serves to provide the conditions that we impose), that are not coprime to $\prod_{p \in J} p$, where $J$ is any set of primes. It is straightforward to show, as we do by combining Lemmas \ref{L1} and \ref{-AAJ-}, that $|T_{[1, |I|-1]}|-|T_{I}|$ is at most $|[J]^3|$. That is to say, in our scheme, each three-element subset of $J$ can serve to increase the number of elements of our relevant set, that are not coprime to $\prod_{p \in J}p$, by at most one. For the purposes of this introduction we may call any element of $[J]^3$ that yields in such a way, a \emph{yielding triple}. %Using a suitable scheme of tagging the yielding triples, 
We show, through combining Lemmas \ref{L1} and \ref{L1A}, that for all four-element subsets, $W$, of $J$, it is impossible for $[W]^3$ to contain more than one yielding triple. 
%Put more simply, in each five-element subset $D$ of $J$, we can eliminate all but two elements of $[D]^3$ from being yielding triples. 
This is the core of the proof of Theorem \ref{Th1}, which establishes an upper bound for the number of elements of $I$ that are not coprime to $\prod_{p \in J} p$. For any $k$, our upper bound on 
\begin{align}
	\notag |\{1+k \le m \le i+k: (m, \prod_{p \in J} p) \ne 1\}|-|\{1 \le m \le i: (m, \prod_{p \in J} p) \ne 1\}|
\end{align}
 is $5|J|^2/8$. As a matter of interest, we find by computer, through using the sieve of Eratosthenes combined with the Prime Number theorem that, for any $n$ and for $i=k =(p_n^2+1)/2$ and $J=P(n)$, the difference cited above, divided by $n^2$, converges to $\log(2)/4$, which is between one and two eighths.

Using $J=P(n)$, Theorem \ref{Th1} will ultimately enable us to use the Euler totient in conjunction, tacitly, with the Chinese Remainder Theorem. Our tacit use of the Chinese Remainder theorem is the part of the method that amounts to our essential concept of the folding of the number scale.  
%These formulations entail functions on the set of all sets $\{k \in I: (k, pqr) \neq 1 \}$ and the set of all sets $\{k \in I: (k, pqrs) \neq 1\}$, such that, in each case, $p$, $q$ $r$ and $s$ are distinct primes. 
We use the Mertens theorem to develop this approach.

\subsection{Extended introduction}

Our first theorem is a prelude to the sieve method that is the focus of this paper. For Theorems \ref{Th2} and \ref{Th3}, key to our method is the set $Z_{i,r}$, where $r$ is even and $i$ is as in our  Introduction (above), and which is specified as the set of all sets, $G$, for which we have the following. For each $s \in \{0,r\}$ and $p \in P(\pi(\sqrt{2i}))$, precisely one element of $\{\{ 1 \le m \le i: p \mid m-s\}: s \in \{0,r\}\}$ is in $G$. We ultimately use $Z_{i,r}$ in the form $\bigcup (\bigcup Z_{i,r})$. For each $K \in Z_{i,r}$, we show through Lemma \ref{-RIJ-} that $|\bigcup K|$ has an upper bound equal to that for the number of elements of any $i$-element interval that are not coprime to $\prod_{k=1}^n p_k$, which is found by Theorem \ref{Th1}. We then show how our sieve method may be used to address the Twin Primes conjecture when $r = 2$ and the Goldbach conjecture when $r = 2i$. Such is explained in the section Method Outline \ref{-MO-}. %The use of $Z_{i,r}$ is founded in the fact that, for each $G \in Z_{i,r}$, Theorem \ref{Th1} provides an upper bound on $|\bigcup G|$.

Let $n=\pi(\sqrt{2i})$. Consider any element, $E_p$, of 
\begin{align}
\notag \{\{ 1 \le m \le i: p \mid m-s\}: s \in \{0,r\}\},
\end{align}
where $p \in P(n)$, to be called a \emph{$p$-sieve} (this way, we employ the sieve of Eratosthenes, which justifies our use of $p_n^2$, as discussed below). Suppose we say that $E_p$ has a larger \emph{magnitude} than the $q$-sieve $E_q$, where $q \in P(n) \setminus \{p\}$, when $|E_p|>|E_q|$. Then we may formulate a value, $w_n$, using the upper bound, $j_n$, on $|\bigcup K|$ as found by Theorem \ref{Th1}. We treat $w_n$ in a similar way to the treatment of each $p \in P(n)$ with respect to the Euler totient. Specifically, we create a quasi-sieve, which might be called a $w_n$-sieve, that may be shown to provide an upper bound on $|\bigcup(\bigcup Z_{i,r})|$, given by
\begin{align}
 i\left(1-\frac{1}{2}\prod_{q \in (P(n)\setminus \{2\}) \cup \left\{\frac{i\prod_{k=1}^n\left(1-1/p_k\right)}{j_n-i\left(1-\prod_{k=1}^n\left(1-1/p_k\right)\right)}\right\}}\left(1-\frac{2}{q}\right)\right). \label{-ZIR-}
\end{align}
(The numerator two, for $2/q$, where $q$ is the bound variable cited as being any element of $P(n)\setminus \{2\}$, is attributable to the folding of the number scale.) In the above expression, $w_n$ is the element in the set, beneath the product, for which there is a union with $P(n) \setminus \{2\}$.
Such a use of $w_n$ is justified by our forthcoming \eqref{-MIU-} (take it that $T$, in \eqref{-MIU-}, is any element of $Z_{i,r}$ and  $K_T$ is any subset of $T$; take it also that $u_{I,n}$ as in \eqref{-MIU-} is equal to $w_n$), combined with the fact that the second term between the outer brackets on the right side of the first relation of \eqref{-MIU-} is equal to zero when $K_T=T$.

 In an analogous way, we can now apply our idea of sieve magnitude to the quasi-sieve; for the purposes of this introduction we can use the phrase \emph{quasi-sieve magnitude}. Indeed, from here, Theorems \ref{Th2} and \ref{Th3} follow by simple algebra combined with known bounds for the prime counting function, all combined, in the case of the Twin Primes conjecture where we take $r=2$, with the Mertens theorem. To address the Goldbach conjecture, we use a work by Nicolas, through which a lower bound on the prime count, connected with the Euler-Mascheroni constant, can be deduced subject to the Riemann hypothesis being true. 

\section{Furnishing a Sieve Method}

\begin{theorem} \label{Th1}
Let $J$ be any set of primes for which $|J| \ge 4$. Let $I$ be any interval. Then
\begin{align}
\notag  |\{m \in I: (m, \prod_{p \in J}p) \ne 1\}|  -|\{m \in [0, |I|-1]: & (m, \prod_{p \in J}p) \ne 1\}|\\
\le&  \frac{5|J|^2}{8}.\label{-SVSE}
\end{align}
\end{theorem}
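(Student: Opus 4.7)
The plan is to reduce \eqref{-SVSE} to a combinatorial counting problem on $[J]^3$. First, I would apply Lemmas \ref{L1} and \ref{-AAJ-} to show that every unit of $|T_I|-|T_{[0,|I|-1]}|$ can be charged to a distinct \emph{yielding triple} in $[J]^3$: a 3-element subset $\{p,q,r\} \subseteq J$ whose simultaneous divisibility conditions create one extra non-coprime element in the shifted interval $I$ that was not already present in $[0,|I|-1]$. This turns the theorem into the assertion that the number of yielding triples is at most $5|J|^2/8$.

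Next, I would combine Lemmas \ref{L1} and \ref{L1A} to show that for any 4-element $W \subseteq J$, at most one of the four elements of $[W]^3$ is yielding. The content here is essentially a Chinese Remainder obstruction: if two distinct triples in $[W]^3$ both yielded, then the relevant endpoint of $I$ would have to lie simultaneously in two incompatible residue systems modulo the product of the primes in the overlap, which no single block of consecutive integers can satisfy. This structural step is the heart of the argument and I expect it to be the main obstacle, since it is the only place where the hypothesis that $I$ is an actual integer interval (rather than an arbitrary union of residue classes) is really used.

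Finally, the ``at most one yielding triple per 4-subset'' property is equivalent to saying that any two distinct yielding triples meet in at most one prime, so the family of yielding triples forms a partial Steiner triple system on $J$. A standard double count --- each yielding triple contains three pairs of primes, and by the Steiner property each pair of primes in $J$ lies in at most one yielding triple --- then bounds the number of yielding triples by $\binom{|J|}{2}/3 = |J|(|J|-1)/6$. Since $|J|(|J|-1)/6 \le 5|J|^2/8$ for all $|J| \ge 1$, inequality \eqref{-SVSE} follows, with the margin between $|J|(|J|-1)/6$ and $5|J|^2/8$ presumably left for convenience in the sieve estimates that follow.
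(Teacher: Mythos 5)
There is a genuine gap at your very first step. You charge every unit of $|\{m \in I: (m,\prod_{p\in J}p)\ne 1\}| - |\{m\in[0,|I|-1]:(m,\prod_{p\in J}p)\ne 1\}|$ to a distinct yielding triple in $[J]^3$, but that accounting is only valid after the first- and second-order divisibility counts of the two sets have been equalized, which is precisely the job of the auxiliary sets $Q_{I,J,t,r}$ and $Q'_{I,J,t,r}$ of Definition \ref{D.E}. For the raw intervals the difference can be strictly positive with no yielding triple at all: take $J=\{2,3,5,7\}$ and $I=\{14,15\}$. Both elements of $I$ are non-coprime to $210$, while only $0$ is non-coprime in $[0,1]$, so the difference is $1$; yet no element of either set is divisible by a product of three distinct primes of $J$ (the smallest such product is $30$). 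The discrepancy lives entirely at the pair level --- in the inclusion--exclusion the terms $\sum_m|[\{p\in J:p\mid m\}]^2|$ differ between the two sets --- and your partial-Steiner-system bound $\binom{|J|}{2}/3$ says nothing about that contribution. Your closing remark about the ``margin'' is therefore off: the $5|J|^2/8$ is not slack left for convenience but the sum of two separate contributions, namely $|J|^2/8$ from the triple analysis of Lemma \ref{-IQM-} (the part of the paper to which your yielding-triple and at-most-one-per-$4$-subset picture does correspond) plus $|[J]^2|=|J|(|J|-1)/2<|J|^2/2$ for the pair-count discrepancy between $I$ and $[0,|I|-1]$, which is paid in \eqref{-TGL-} when the $Q$, $Q'$ sets are stripped away via Lemma \ref{-BI-}.

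Your second and third steps (at most one yielding triple per $4$-subset, hence any two yielding triples meet in at most one prime, hence at most $\binom{|J|}{2}/3$ of them) are a clean repackaging of what Lemma \ref{-IQM-} does with the functions $f_{S_J,M,W}$ and $g_{S_J,T,H,F}$ and the set $K$, and would give a bound well below $5|J|^2/8$ if the reduction to triples were available for the raw intervals. As written, though, the proposal proves a statement about the normalized sets of Definition \ref{D.E} rather than about $I$ versus $[0,|I|-1]$, and the omitted pair-level term is where most of the $5|J|^2/8$ actually comes from. To repair it you must either follow the paper's route (normalize with $Q$, $Q'$, bound the triples, then pay $|[J]^2|$ to undo the normalization) or bound the pair contribution by a separate direct argument.
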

\vspace{\baselineskip}

\begin{lemma} \label{-BI-}
Let $A$ and $A'$ be any set of integers and $T$ be any set of primes.
% for which $|\{m \in B: (m, \prod_{p \in T}p) \ne 1\}|< $|\{m \in A: (m, \prod_{p \in T}p) \ne 1\}|$$
% for which for which, for each $r \in \{1,2\}$ and each $M_r \in [J]^r$
%\begin{align}
%	 |\{m \in A :  \prod_{p \in M_r}p \mid m \}|=|\{m \in B : \prod_{p \in M_r}p \mid m \}| 
	%\label{-HJK-}
%\end{align} % !!
% TAKE IT LATER THAT A = I \cup K  and B=[0, |I|-1] \cup K'
For any $V \in \{A, A'\}$, let 
\begin{align}
	\notag & h_{V,T} = \sum_{m \in V}  |\{p \in T: p \mid m\}| - \sum_{m \in V} |[\{p \in T: p \mid m\}]^2|\\
 &\ \ \ \ \ \ \  \  \ \ \ \ +  \sum_{\substack{m \in V\\ |\{p \in T: p \mid m\}|>2}} (|[\{p \in T: p \mid m\}]^2| - |\{p \in T: p \mid m\}|  +1). \label{-FCR-}
\end{align} 
Then
\begin{align}
|\{m \in A: (m, \prod_{p \in T} p) \ne 1\}| - |\{m \in A': (m, \prod_{p \in T} p) \ne 1\}|  = h_{A,T}-h_{A',T}.
   \label{-BKG-}  
\end{align}
\end{lemma}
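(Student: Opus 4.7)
My plan is to prove the stronger identity that, for any set $V$ of integers and any set $T$ of primes,
\begin{align}
\notag h_{V,T} = |\{m \in V : (m, \textstyle\prod_{p \in T} p) \ne 1\}|.
\end{align}
Once this is established, the conclusion \eqref{-BKG-} follows immediately by subtracting the identity for $V=A'$ from the identity for $V=A$, so I would dispose of that step in a single line at the end.

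The approach is pointwise in $m$: I would show that each $m \in V$ contributes either $0$ or $1$ to $h_{V,T}$ according to whether $(m,\prod_{p \in T} p)=1$ or not. To set this up, for each $m \in V$ I would introduce the abbreviation $d_m := |\{p \in T : p \mid m\}|$, and note the purely combinatorial fact that $|[\{p \in T : p \mid m\}]^2| = \binom{d_m}{2}$. Rewriting \eqref{-FCR-} under this notation gives $h_{V,T} = \sum_{m \in V} c(d_m)$, where $c(d)$ equals $d - \binom{d}{2}$ when $d \le 2$, and equals $d - \binom{d}{2} + \binom{d}{2} - d + 1 = 1$ when $d \ge 3$.

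The main (and only) bookkeeping step is then a quick case analysis on $d_m$. When $d_m=0$ the pointwise contribution is $0$; when $d_m=1$ it is $1 - 0 = 1$; when $d_m=2$ it is $2 - 1 = 1$; and when $d_m \ge 3$ it is $1$ by the computation just given. Thus $c(d_m) = \mathbf{1}[d_m \ge 1]$, which is exactly the indicator that $m$ shares a prime factor with $\prod_{p\in T} p$. Summing over $m \in V$ yields the claimed identity.

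I do not expect any real obstacle here; the only thing to verify with care is that the third sum in \eqref{-FCR-} is precisely the correction term needed when $d_m \ge 3$ — that is, that $\binom{d_m}{2} - d_m + 1$ turns the otherwise-negative contribution $d_m - \binom{d_m}{2}$ into $1$. Once that algebraic identity is noted, the remainder is routine and the lemma follows at once by linearity of the sum.
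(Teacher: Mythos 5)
Your proof is correct and takes essentially the same route as the paper: both establish the stronger per-set identity $h_{V,T} = |\{m \in V : (m, \prod_{p \in T} p) \ne 1\}|$ and then subtract the $V=A'$ case from the $V=A$ case. Your pointwise case analysis on $d_m$ is a cleaner and more transparent presentation of the same counting argument that the paper carries out via its chain of sum manipulations.
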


\begin{proof}
We have
%, for any $|\{m \in B: (m, \prod_{p \in T}p) \ne 1\}|$-element subset $K$ of $A$,
\begin{align}
\notag |\{m \in V: & (m, \prod_{p \in T}p) \ne 1\}| \\
\notag =&  \sum_{\substack{m \in A}} |\{p \in T: p \mid m\}| - \sum_{\substack{m \in V\\ (m, \prod_{p \in T}p) \ne 1}} (|\{p \in T: p \mid m\}|-1) \\
\notag = & \sum_{m \in V} |\{p \in T: p \mid m\}| - \sum_{m \in V} |[\{p \in T: p \mid m\}]^2|\\
&\ \ \ \ \ \ \ \  + \sum_{\substack{m \in V\\ |\{p \in T: p \mid m\}|>2}} (|[\{p \in T: p \mid m\}]^2| -  |\{p \in T: p \mid m\}|+1), 
   \label{-BIG-}  
\end{align}
implying \eqref{-BKG-}. Here, the 'minus one' and, as in the case where the immediately preceding term is negative, 'one' terms in the expressions for the summands are found by the following. There is necessarily at least one element of $T$ that divides the bound variable $m$ cited in the $|\{p \in T: p \mid m\}|-1$ that is the expression for the summands of the second term on the right side of the first relation of \eqref{-BIG-}. Accordingly, $|\{m\} \cap \{s \in V: (s, \prod_{p \in T}p) \ne 1\}|=1$ is a constant function of $|\{p \in T: p \mid m\}|$.

 Further, for all $m$ for which $|\{p \in T: p \mid m\}|=2$, we have 
\begin{align}
	|\{m\}| = |\{p \in T: p \mid m\}| - |[\{p \in T: p \mid m\}]^2|,
\end{align}  % !!
 giving the term, in the second relation, $- \sum_{m \in V} |[\{p \in T: p \mid m\}]^2|$. Contrastingly, for all $k$ for which $|\{p \in T: p \mid k\}|>2$ we have 
\begin{align}
	|\{k\}| \ne |\{p \in T: p \mid k\}| - |[\{p \in T: p \mid k\}]^2|
\end{align} % !!
which, through the right side of the first relation, accounts for the final two terms of the final expression for the summands and completes the proof.
\end{proof}

\subsection{Definition.} \label{D.E}  
Let $I$ be as in Theorem \ref{Th1}. For any $t \in \{0,1\}$, 
%and $t' \in \{0,1\} \setminus \{t\}$, 
 any set $L$ of primes and any integer $r$, let $Q_{I,L,t,r}$ and $Q'_{I,L,t,r}$ be any sets of integers for which I to III, below, are all true:

% $(Q_{I,L,t,r} \cup Q'_{I,L,t,r}) \cap ([0, |I|-1] \cup I) = \emptyset$;
\vspace{\baselineskip}

\noindent I.  $\max\{ |\{p \in L: p \mid m\}|: m \in Q_{I,L,t,r} \cup Q'_{I,L,t,r}\} \le r$;
\vspace{\baselineskip}

\noindent II. for each $s \in \{j \in \mathbb{N}: 1 \le j \le r\}$ and each $M \in [L]^s$ 
\begin{align} 
 |\{m \in [t, |I|-1] \cup Q_{I,L,t,r}: \prod_{p \in M}p \mid m\}| =	 |\{m \in I \cup Q'_{I,L,t,r}: \prod_{p \in M}p \mid m\}|; \label{-RGO-}
\end{align} 
\vspace{\baselineskip}

\noindent III. $|[t, |I|-1] \cup Q_{I,L,t,r}| = |I \cup Q'_{I,L,t,r}|$.
\vspace{\baselineskip}

We note that \eqref{-RGO-} implies that
\begin{align} 
	\sum_{m \in [t, |I|-1] \cup Q_{I,L,t,r}} |[\{p \in L: p \mid m\}]^s| =	\sum_{m \in I \cup Q'_{I,L,t,r}}  |[\{p \in L: p \mid m\}]^s|.
\end{align} % !!

\begin{lemma} \label{L1}
%For any $t \in \{0,1\}$, let $F$ and $F'$ be any sets of integers that satisfy all stated conditions on $Q_t$ and $Q'_t$ respectively.
Let $I$ be as in Theorem \ref{Th1}. Then for any three-element set $H$ of primes for which 
\begin{align}
\notag   |\{m \in I: \prod_{p \in H}p \mid m\}| - |\{1 \le m \le |I|-1: \prod_{p \in H}p \mid m\}| = 1
\end{align}
we have
\begin{align}
\notag & |\{m \in I \cup Q'_{I,H,1,2}: (m, \prod_{p \in H}p) \ne 1 \}|  - |\{m \in  [1, |I|-1] \cup Q_{I,H,1,2}: (m, \prod_{p \in H}p) \ne 1\}|\\
\notag &\  \  \  \  \ \  \  \ \ \ \ \ \ \ \ \ \ \ \ \ \ \  \ \ \ \ \ = 1.
\label{-BHB-}  
\end{align}
\end{lemma}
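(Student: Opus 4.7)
The plan is to deduce the identity directly from Lemma \ref{-BI-} by taking $A = I \cup Q'_{I,H,1,2}$, $A' = [1,|I|-1] \cup Q_{I,H,1,2}$, and $T = H$. The left side of the claimed equation is then precisely $h_{A,H}-h_{A',H}$ by \eqref{-BKG-}, so the task reduces to evaluating each of the three sums in \eqref{-FCR-} on the two sets and subtracting.

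First I would swap the order of summation in the first two sums, writing
\begin{align}
\notag \sum_{m \in V}|\{p \in H : p \mid m\}| = \sum_{M \in [H]^1}|\{m \in V : \textstyle\prod_{p \in M}p \mid m\}|
\end{align}
and analogously for the $|[\cdot]^2|$-sum. Condition II in the Definition, invoked with $s=1$ and $s=2$ (both permitted because $r=2$), asserts that the corresponding counts over $A$ and $A'$ agree term by term. Hence the first two sums in $h_{A,H}$ cancel exactly against those in $h_{A',H}$, and only the contributions from the third sum survive.

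Next I would analyse that third sum. Since $|H|=3$, a summand is nonzero only when $m$ is divisible by every prime of $H$; for such an $m$, $|\{p \in H : p \mid m\}|=3$, $|[\{p \in H : p \mid m\}]^2|=\binom{3}{2}=3$, and the summand equals $3-3+1=1$. Therefore the third sum reduces to $|\{m \in V : \prod_{p \in H}p \mid m\}|$. Condition I of the Definition forbids any element of $Q_{I,H,1,2}$ or $Q'_{I,H,1,2}$ from being divisible by more than $r=2$ primes of $H$, so the auxiliary sets contribute nothing here. The third-sum difference therefore collapses to
\begin{align}
\notag |\{m \in I : \textstyle\prod_{p \in H}p \mid m\}| - |\{1 \le m \le |I|-1 : \prod_{p \in H}p \mid m\}|,
\end{align}
which the hypothesis of the lemma equates to $1$.

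The main obstacle, such as it is, is purely bookkeeping: checking that after the change of order of summation, the first two pieces of $h_{V,H}$ are of exactly the form controlled by condition II, and that condition I genuinely rules out any triple-divisibility contribution from $Q$ or $Q'$. No new number-theoretic input is required beyond Lemma \ref{-BI-} and the three conditions of the Definition.
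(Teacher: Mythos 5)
Your proposal is correct and follows essentially the same route as the paper: both apply Lemma \ref{-BI-} with $A = I \cup Q'_{I,H,1,2}$, $A' = [1,|I|-1] \cup Q_{I,H,1,2}$, $T=H$, use Conditions I--III of Definition \ref{D.E} to cancel the first two sums, and reduce the third sum to the hypothesis of the lemma. You simply make explicit the bookkeeping (the interchange of summation, the evaluation $3-3+1=1$, and the role of Condition I in excluding $Q$ and $Q'$) that the paper's one-line derivation leaves implicit.
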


\begin{proof}
Conditions I to III in Definition \ref{D.E} require through Lemma \ref{-BI-} for $A =  I \cup Q'_{I,H,1,2}$ and $A'=[1, |I|-1] \cup Q_{I,H,1,2}$ and $T=H$, that
\begin{align} 
	\notag & |\{m \in I \cup Q'_{I,H,1,2}: (m, \prod_{p \in H} p) \ne 1\}| -|\{m \in [1, |I|-1] \cup Q_{I,H,1,2}: (m, \prod_{p \in H} p) \ne 1\}|\\
	\notag & \  \ \ \ \ =  \sum_{\substack{m \in  I \cup Q'_{I,H,1,2}\\ |\{p \in H: p \mid m\}|>2}} (|[\{p \in H: p \mid m\}]^2| - |\{p \in H: p \mid m\}| +1) \\
	\notag &\ \ \ \ \ \ \ \ \  \   - \sum_{\substack{m \in [1, |I|-1] \cup Q_{I,H,1,2} \\ |\{p \in H: p \mid m\}|>2}} (|[\{p \in H: p  \mid m\}]^2| - |\{p \in H: p \mid m\}| +1)\\ 
		& \  \ \ \ \  = 1
\end{align}
which completes the proof. 
\end{proof}

\begin{lemma} \label{L1A}
 Let $I$ be as in Theorem \ref{Th1}. Let $Z$ be any four-element set of primes for which
\begin{align}
 |\{0 \le m \le |I|-1: \prod_{p \in Z}p \mid m\}|	- |\{m \in I: \prod_{p \in Z}p \mid m\}|  = 1.
\end{align} % !!
Then 
\begin{align}
\notag & |\{m \in I \cup Q'_{I,Z,0,3}: (m, \prod_{p \in Z}p) \ne 1 \}|  - |\{m \in  [0, |I|-1] \cup Q_{I,Z,0,3}: (m, \prod_{p \in Z}p) \ne 1\}|\\
&\  \  \  \  \ \  \  \ \ \ \ \ \ \ \ \ \ \ \ \ \ \  \ \ \ \ \ = 1. \label{-BIB-}  
\end{align} 
\end{lemma}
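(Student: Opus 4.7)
My plan is to mirror the proof of Lemma~\ref{L1}, adapting the bookkeeping to the case $|Z|=4$. I would apply Lemma~\ref{-BI-} with $A=I\cup Q'_{I,Z,0,3}$, $A'=[0,|I|-1]\cup Q_{I,Z,0,3}$, and $T=Z$. Condition~II in Definition~\ref{D.E} for $s=1$ and $s=2$ forces the first two sums in \eqref{-FCR-} to agree between $h_{A,Z}$ and $h_{A',Z}$ (via double-counting: $\sum_{m\in V}|[\{p\in Z:p\mid m\}]^s|=\sum_{M\in[Z]^s}|\{m\in V:\prod_{p\in M}p\mid m\}|$), so the left-hand side of \eqref{-BIB-} reduces to the difference of the final sum in \eqref{-FCR-}, taken over those $m$ with $|\{p\in Z:p\mid m\}|>2$.

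Since $|Z|=4$, every such $m$ has $|\{p\in Z:p\mid m\}|\in\{3,4\}$. Evaluating the summand gives a contribution of $3-3+1=1$ per exactly-$3$-divisor element and $6-4+1=3$ per all-$4$-divisor element. Writing $N_3(V)$ and $N_4(V)$ for the cardinalities of these two classes inside $V$, the target quantity reduces to
\begin{align}
\notag (N_3(A)-N_3(A'))+3(N_4(A)-N_4(A')).
\end{align}
Condition~I with $r=3$ forbids all-$4$-divisor elements from lying in $Q_{I,Z,0,3}$ or $Q'_{I,Z,0,3}$, so $N_4(A')-N_4(A)$ equals the divisibility-count difference between $[0,|I|-1]$ and $I$, which by hypothesis is $1$.

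To handle $N_3(A)-N_3(A')$, I would invoke condition~II at $s=3$ and sum \eqref{-RGO-} over all $M\in[Z]^3$: each exactly-$3$-divisor element is counted once while each all-$4$-divisor element is counted $\binom{4}{3}=4$ times, yielding $N_3(A)+4N_4(A)=N_3(A')+4N_4(A')$ and hence $N_3(A)-N_3(A')=4$. Substituting into the displayed expression produces $4+3(-1)=1$, which is \eqref{-BIB-}.

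The only step requiring genuine care is this inclusion-exclusion conversion of the subset-divisibility information of condition~II into the exactly-divisor counts $N_3$ and $N_4$ that appear inside $h_{V,Z}$; the rest is mechanical matching with Lemma~\ref{-BI-}. A related subtlety is that the roles of $A$ and $A'$ are swapped relative to Lemma~\ref{L1}---the hypothesis now favours $[0,|I|-1]$ over $I$---and this sign is absorbed correctly because the $N_3$ and $N_4$ contributions enter the final expression with opposite effective signs after inclusion-exclusion.
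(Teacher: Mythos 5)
Your proposal is correct and follows essentially the same route as the paper: apply Lemma~\ref{-BI-} with $A=I\cup Q'_{I,Z,0,3}$, $A'=[0,|I|-1]\cup Q_{I,Z,0,3}$, cancel the first two sums of \eqref{-FCR-} via condition~II, and compute the residual difference as $4\cdot 1-(6-4+1)=1$. Your explicit $N_3$/$N_4$ bookkeeping, with $N_3(A)-N_3(A')=4$ forced by condition~II at $s=3$ and $N_4(A')-N_4(A)=1$ forced by condition~I, is exactly the content the paper's terser displayed computation is asserting, spelled out more carefully.
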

 
\begin{proof} 
By Lemma \ref{-BI-}, for $A = I \cup Q'_{I,Z, 0,3}$, $A'=[0, |I|-1] \cup Q_{I, Z, 0, 3}$ and $T=Z$, the conditions on $Q_{I,Z,0,3}$ and $Q'_{I,Z,0,3}$ require that, for some $0 \le m <|I|-1$ such that $|\{p \in Z: p \mid m\}|=4$,
\begin{align}
\notag & |\{u \in I \cup Q'_{I,Z,0,3}: (u, \prod_{p \in Z}p) \ne 1\}|- |\{u \in [0, |I|-1] \cup Q_{I,Z,0,3}: (u, \prod_{p \in Z}{p}) \ne 1\}|\\
\notag &\ \  \  = \sum_{\substack{u \in I \cup Q'_{I,Z,0,3} \\ |\{p \in Z: p \mid u\}|>2}} (|[\{p \in Z: p \mid u\}]^2| - |\{p \in Z: p \mid u\}| +1)\\
\notag &\ \ \ \ \ \ \ - \sum_{\substack{u \in [0, |I|-1] \cup Q_{Z,J,0,3}\\ |\{p \in Z: p \mid u\}|>2}} (|[\{p \in Z: p \mid u\}]^2|\\
\notag &\ \ \ \ \ \ \ \ \ \ \ \ \ \ \ \ \ \ \ \ - |\{p \in Z: p \mid u\}| +1)\\
\notag &\ \  \  = |[\{p \in Z: p \mid m\}]^3|\left({3 \choose 2}-3+1\right)-(|[\{p \in Z: p \mid m\}]^2|\\
\notag &\ \ \ \  \ \ \ \ \ \ \ \ \ -|\{p \in Z: p \mid m\}|+1)\\
\notag &\ \  \  = 4-(6-4+1)\\  
 &\ \  \  = 1
\end{align} % !!
which completes the proof.
\end{proof}

\begin{lemma} \label{-AAJ-}
Let $J$ be as in Theorem \ref{Th1}. Let $A$ and $A'$ be any sets of integers for which, for each $r \in \{1,2,3\}$,
\begin{align}
\sum_{m \in A} |[\{p \in J: p \mid m\}]^r| = \sum_{m \in A'} |[\{p \in J: p \mid m\}]^r| \label{-PJR-}
\end{align}
and 
\begin{align}
\notag 	& \sum_{\substack{m \in A \\ |\{p \in J: p \mid m\}| >3}} \frac{|[\{p \in J: p \mid m\}]^3|}{|\{u \in A: |\{p \in J: p \mid u\}| >3\}|} \\
\notag	&\ \  \ \   \ \ \ \  \ \ \  \  > \sum_{\substack{m \in A' \\ |\{p \in J: p \mid m\}| >3}} \frac{|[\{p \in J: p \mid m\}]^3|}{|\{u \in A': |\{p \in J: p \mid u\}| >3\}|}. 
\end{align} % !!
Then
\begin{align}
	|\{m \in A: (m, \prod_{p \in J}p) \ne 1\}| < |\{m \in A': (m, \prod_{p \in J}p) \ne 1\}|. \label{-BHW-}
\end{align} 
\end{lemma}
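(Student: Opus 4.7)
The plan is to apply Lemma \ref{-BI-} with $T = J$, so that by \eqref{-BKG-} the left-hand side of \eqref{-BHW-} equals $h_{A,J} - h_{A',J}$ in the notation of \eqref{-FCR-}. Writing $k_m = |\{p \in J: p \mid m\}|$ for short, the hypothesis \eqref{-PJR-} at $r = 1$ and $r = 2$ cancels the first two summations of \eqref{-FCR-} between $A$ and $A'$, and the elementary identity $\binom{k}{2} - k + 1 = \binom{k-1}{2}$ simplifies the remaining summand, reducing the task to
\begin{align}
\notag h_{A,J} - h_{A',J} = \sum_{\substack{m \in A \\ k_m > 2}}\binom{k_m - 1}{2} - \sum_{\substack{m \in A' \\ k_m > 2}}\binom{k_m - 1}{2}.
\end{align}

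Next, I would apply the Pascal identity $\binom{k-1}{2} = \binom{k}{3} - \binom{k-1}{3}$, valid for $k \ge 3$, and note that $\binom{k}{3} = 0$ for $k < 3$ while $\binom{k-1}{3} = 0$ for $k \le 3$. The $\binom{k_m}{3}$ pieces, once extended over all $m$, cancel between $A$ and $A'$ by \eqref{-PJR-} at $r = 3$. This leaves
\begin{align}
\notag h_{A,J} - h_{A',J} = \sum_{\substack{m \in A' \\ k_m > 3}} \binom{k_m - 1}{3} - \sum_{\substack{m \in A \\ k_m > 3}} \binom{k_m - 1}{3},
\end{align}
so the lemma is equivalent to the strict inequality $\sum_{m \in A, k_m > 3} \binom{k_m-1}{3} > \sum_{m \in A', k_m > 3} \binom{k_m-1}{3}$.

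To close, I would exploit the identity $\binom{k-1}{3}/\binom{k}{3} = (k-3)/k$, which is strictly increasing on $k > 3$. The averaging hypothesis states that the mean of $\binom{k_m}{3}$ over $\{m \in A: k_m > 3\}$ strictly exceeds the corresponding mean over $\{m \in A': k_m > 3\}$, which biases the distribution of $k_m$-values upward on the $A$-side; combined with \eqref{-PJR-} at $r = 3$ (which, together with $\binom{3}{3} = 1$, enforces an offsetting adjustment in the $k_m = 3$ counts), this monotonicity should translate the strict inequality of $\binom{k_m}{3}$-averages into a strict excess of the $\binom{k_m - 1}{3}$-sums on the $A$-side.

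The main obstacle will be making this last convexity-type step rigorous: the averaging hypothesis compares means rather than sums, and the cardinalities $|\{m \in A: k_m > 3\}|$ and $|\{m \in A': k_m > 3\}|$ are not assumed equal. I would handle this by using \eqref{-PJR-} at $r = 3$ to re-express the $A'$-sum in terms of the $A$-sum with an explicit correction through the $k_m = 3$ counts, then reading off the sign from the strict monotonicity of $(k-3)/k$ on $k \ge 4$.
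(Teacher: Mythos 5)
Your reduction is correct and is in fact much cleaner than what the paper does: Lemma \ref{-BI-} with $T=J$, the cancellation of the $r=1,2$ terms via \eqref{-PJR-}, the identity $\binom{k}{2}-k+1=\binom{k-1}{2}$, and Pascal's rule $\binom{k-1}{2}=\binom{k}{3}-\binom{k-1}{3}$ together with \eqref{-PJR-} at $r=3$ do show that \eqref{-BHW-} is equivalent to
\begin{align}
\notag \sum_{\substack{m\in A\\ k_m>3}}\binom{k_m-1}{3}\ >\ \sum_{\substack{m\in A'\\ k_m>3}}\binom{k_m-1}{3},\qquad k_m=|\{p\in J:p\mid m\}|.
\end{align}
But the step you flag as the ``main obstacle'' is not a technicality to be patched; it is where the argument dies, because the stated hypotheses do not imply this inequality. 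Take $J=\{2,3,5,7,11\}$. Let $A$ contain $385$ integers divisible by exactly one prime of $J$, $390$ divisible by exactly three, and one divisible by all five; let $A'$ contain $580$ integers divisible by exactly two primes of $J$ and $100$ divisible by exactly four (such integers exist in abundance, e.g.\ products of the chosen primes with primes outside $J$). Then \eqref{-PJR-} holds for $r=1$ ($385+1170+5=1560=1160+400$), for $r=2$ ($1170+10=1180=580+600$), and for $r=3$ ($390+10=400=100\cdot 4$); the averaging hypothesis holds since $10/1>400/100$. Yet the target inequality reads $1\cdot\binom{4}{3}>100\cdot\binom{3}{3}$, i.e.\ $4>100$, which is false, and directly $|\{m\in A:(m,\prod_{p\in J}p)\ne 1\}|=776>680=|\{m\in A':(m,\prod_{p\in J}p)\ne 1\}|$, contradicting \eqref{-BHW-}.

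The structural reason is the one you half-identified: the hypothesis controls only the \emph{means} of $\binom{k_m}{3}$ over the sets $\{k_m>3\}$, while the conclusion needs a comparison of \emph{sums} of $\binom{k_m-1}{3}$, and \eqref{-PJR-} at $r=3$ lets mass be shifted freely into the $k_m=3$ stratum (where $\binom{k_m}{3}=1$ but $\binom{k_m-1}{3}=0$), so the two quantities can be decoupled arbitrarily. No convexity or monotonicity argument (including the paper's own, which rests on the increasing ratio $2(k-2)/(3(k-1))$ and never engages with the mismatch between means and sums) can bridge this. So the lemma as stated is false, and your plan fails only because no correct completion exists; any repair would have to strengthen the hypotheses, e.g.\ by controlling the cardinalities $|\{m\in A:k_m>3\}|$ and $|\{m\in A':k_m>3\}|$ or by replacing the mean condition with a sum condition on $\binom{k_m-1}{3}$ itself.
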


% \subsection{Remark} \label{RII.}
\begin{proof}
We have I and II, below.
\vspace{\baselineskip}

\noindent I. For all $k \geq 3$,
\begin{align}
\notag \frac{{k   \choose  3}k}{{k   \choose 2}^2} =&\ \frac{4(k -1)(k -2)}{6(k -1)^2}\\
=& \frac{2(k-2)}{3(k-1)} \label{-UTY}
\end{align}
is an increasing function of $k$. We may substitute $k = |\{p \in J: p \mid m\}|$ for any $m \in [1, |I|-1] \cup I$ such that $|\{p \in J: p \mid m\}| \ge 3$. 
\vspace{\baselineskip}

\noindent II. In \eqref{-UTY}, for $k = |\{p \in J: p \mid m\}|$, the quotients express
\begin{align}
\frac{{|\{p \in J : p \mid m\}| \choose 3 }}{ {|\{p \in J : p \mid m\}| \choose 2}}
\end{align}
as a ratio of 
\begin{align}
	\frac{{|\{p \in J : p \mid m\}| \choose 2 }}{ |\{p \in J : p \mid m\}|}.
 \end{align}  % !!
\vspace{\baselineskip}

Combining I and II gives
\begin{align}
 \sum_{\substack{m \in A \\ |\{p \in J: p \mid m\}| \in \{1,2\}}} \frac{|[\{p \in J: p \mid m\}]^2|}{ |\{p \in J: p \mid m\}|}   > \sum_{\substack{m \in A' \\ |\{p \in J: p \mid m\}| \in \{1,2\}}} \frac{|[\{p \in J: p \mid m\}]^2|}{ |\{p \in J: p \mid m\}|}. \label{-APJ-}
\end{align} 
Further, 
\vspace{\baselineskip}

\noindent III. \eqref{-PJR-} for $r=1$ implies that
\begin{align}
\notag & |\{m \in A': (m, \prod_{p \in J}p) \ne 1\}| -	|\{m \in A: (m, \prod_{p \in J}p) \ne 1\}| \\
&\ \  \ \ \ \ \ \ \ \  = \sum_{m \in A } (|\{p \in J: p \mid m\}|-1) - \sum_{m \in A'} (|\{p \in J: p \mid m\}|-1)
\end{align}  % !!
with, for any $m \in A \cup A'$, $|\{p \in J: p \mid m\}|-1 = 0$ when $|\{p \in J: p \mid m\}|=1$. 
\vspace{\baselineskip}

\noindent For all $m \in A \cup A'$ for which $|\{p \in J: p \mid k\}| > 2$, and all $k$ for which $|\{p \in J: p \mid k\}| = 2$ we have
\begin{align}
	\frac{|[\{p \in J: p \mid m\}]^2|}{|\{p \in J: p \mid m\}|} > \frac{|[\{p \in J: p \mid k\}]^2|}{|\{p \in J: p \mid k\}|}.
\end{align}  % !!
Therefore, combining III and \eqref{-APJ-} gives \eqref{-BHW-}.
\end{proof}

\begin{lemma} \label{-IQM-}
Let $I$ and $J$ be as in Theorem \ref{Th1}. Then
\begin{align}
\notag & |\{m \in I \cup Q'_{I,J,0,2}: (m, \prod_{p \in J}p) \ne 1 \}|  - |\{m \in  [0, |I|-1] \cup Q_{I,J,0,2}: (m, \prod_{p \in J}p) \ne 1\}|\\
&\  \  \  \  \ \  \  \ \ \ \ \ \ \ \ \ \le \frac{|J|^2}{8}. \label{-QIJ-}  
\end{align}
\end{lemma}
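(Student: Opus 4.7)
The plan is to apply Lemma~\ref{-BI-} to the pair $A = I \cup Q'_{I,J,0,2}$ and $A' = [0,|I|-1]\cup Q_{I,J,0,2}$, with $T=J$, so that the left side of \eqref{-QIJ-} equals $h_{A,J}-h_{A',J}$. Conditions II of Definition~\ref{D.E} at $r=2$ impose equality between $A$ and $A'$ of both $\sum_{m\in V}|\{p\in J:p\mid m\}|$ and $\sum_{m\in V}|[\{p\in J:p\mid m\}]^2|$, so the first two summands on the right of \eqref{-FCR-} cancel. Writing $k(m)=|\{p\in J:p\mid m\}|$ and using the identity $\binom{k}{2}-k+1=\binom{k-1}{2}$, the left side of \eqref{-QIJ-} reduces to
\begin{align}
\notag \sum_{\substack{m\in A\\ k(m)>2}}\binom{k(m)-1}{2}\;-\sum_{\substack{m\in A'\\ k(m)>2}}\binom{k(m)-1}{2}.
\end{align}
Condition~I ensures $k(m)\le 2$ on $Q_{I,J,0,2}\cup Q'_{I,J,0,2}$, so the two sums range only over $m\in I$ and $m\in[0,|I|-1]$ respectively; in particular the contribution from the element $0\in[0,|I|-1]$, for which $k(0)=|J|$, already subtracts $\binom{|J|-1}{2}$.

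I would then decompose the remaining signed sum according to the 3-subsets $H\in[J]^3$ that are \emph{yielding} in the sense of Lemma~\ref{L1}. After translating from the baseline $[1,|I|-1]$ used in Lemma~\ref{L1} to $[0,|I|-1]$ (using that $0$ is a multiple of every $\prod_{p\in H}p$), each such $H$ can account for at most one unit of the signed sum. Lemma~\ref{L1A} then forbids two yielding triples from sharing two primes, so the collection of yielding triples is a linear $3$-uniform hypergraph on the vertex set $J$.

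The main obstacle, and the heart of the proof, is converting this structural restriction into the numerical bound $|J|^2/8$. A naive count of linear triples gives only $\binom{|J|}{2}/3$, which for $|J|\ge 7$ exceeds $|J|^2/8$. To sharpen, I would couple the hypergraph restriction with Lemma~\ref{-AAJ-} and use the convexity identity \eqref{-UTY}: any configuration that would push the difference beyond $|J|^2/8$ requires redistributing weight from $m$ with $k(m)=3$ towards $m$ with $k(m)\ge 4$, but such a redistribution conflicts with the singleton and pair equalities enforced by Condition~II, while Lemma~\ref{-AAJ-} fixes the sign of the resulting discrepancy. Optimising under these coupled constraints, together with the guaranteed $-\binom{|J|-1}{2}$ debit from the element $0\in[0,|I|-1]$, collapses the bound to $|J|^2/8$ and yields \eqref{-QIJ-}.
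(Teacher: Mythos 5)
There is a genuine gap, and you have in fact located it yourself: the passage from the structural restrictions on yielding triples to the numerical bound $|J|^2/8$ is asserted rather than proved. Your reduction via Lemma~\ref{-BI-} and the identity $\binom{k}{2}-k+1=\binom{k-1}{2}$ is sound, and your use of Lemmas~\ref{L1} and~\ref{L1A} to conclude that no four-element subset of $J$ contains two yielding triples (equivalently, that the yielding triples form a linear $3$-uniform hypergraph) matches the paper's step~I in its proof of this lemma. But, as you correctly compute, a linear $3$-uniform hypergraph on $|J|$ vertices can have up to $\binom{|J|}{2}/3\approx|J|^2/6$ edges, which exceeds $|J|^2/8$. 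Your final paragraph then invokes Lemma~\ref{-AAJ-}, the convexity identity \eqref{-UTY}, and the debit $\binom{|J|-1}{2}$ from the element $0$, and declares that ``optimising under these coupled constraints \ldots\ collapses the bound to $|J|^2/8$'' --- but no inequality is derived, no optimisation is carried out, and it is not even specified what quantity is being optimised over what feasible set. Moreover the $m=0$ debit and the yielding-triple count live in two different decompositions of the same signed sum; adding the debit on top of the triple count without a precise accounting double-counts, and indeed $\binom{|J|}{2}/3-\binom{|J|-1}{2}$ is negative for large $|J|$, which signals that the bookkeeping has not been reconciled.

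The paper closes this gap differently: beyond the ``at most one yielding triple per four-element set'' condition, its step~II imposes a further restriction on the two-element subsets $T,T'$ of distinct yielding triples, forcing $T\cap T'=\emptyset$ whenever the triples are distinct. This is strictly stronger than linearity (which only forbids two triples sharing a \emph{pair}, not a single prime) and cuts the admissible configuration down to a set $K\subseteq[J]^2$ of pairwise disjoint pairs with $|K|=\lfloor|J|/2\rfloor$, yielding the count $\binom{\lfloor|K|/2\rfloor}{2}+1<|J|^2/8$. Whatever one thinks of the rigour of that step, it is the decisive ingredient, and your proposal does not contain it or any substitute for it.
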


\begin{proof}
Let $S_J$ be any subset of $[J]^3$ for which $|S_J|$ is an upper bound for the value on the left side of \eqref{-QIJ-}. We shall proceed progressively to justify choosing $S_J$ so that $|S_J|$ is equal to the right side of \eqref{-QIJ-}.
	
	For any distinct $H$ and $H'$ in $[J]^3$, the fact that zero divides $\prod_{p \in H \cup H'} p$ implies that
\begin{align}
	\notag & |\{1 \le m \le |I|: \prod_{p \in H}p \mid m\} \cap \{1 \le m \le |I|: \prod_{p \in H'}p \mid m\}|\\
	&\ \ \ \le |\{m \in I: \prod_{p \in H}p \mid m\} \cap \{m \in I: \prod_{p \in H'}p \mid m\}|. \label{-MIP-}
\end{align}
	Further, for each $M \in [J]^3$ we have 
\begin{align}
	|\{0 \le m \le |I|-1: \prod_{p \in M}p \mid m\}| \ge |\{m \in I: \prod_{p \in M}p \mid m\}|.
\end{align} % !!
Therefore, combining all of Lemmas \ref{L1}, \ref{L1A} and \ref{-AAJ-} for $A= [0, |I|-1] \cup Q_{I,J,0,3}$ and $A'= I \cup Q'_{I,J,0,3}$, gives, for all subsets, $N$, of $J$,
\begin{align}
 \notag & |\{m \in I \cup Q'_{I,J,0,3}: (m, \prod_{p \in J}p) \ne 1 \}|  - |\{m \in  [0, |I|-1] \cup Q_{I,J,0,3}: (m, \prod_{p \in N}p) \ne 1\}| \\
\notag &\ \ \  \ \ \ \ \ \ \ \ \  \ \  \  \le  |\{m \in I \cup Q'_{I,J,1,2}: (m, \prod_{p \in N}p) \ne 1 \}|\\
\notag &\ \ \  \ \ \ \ \ \ \ \ \  \ \  \ \ \ \ \ \ \ \  \ \ \   - |\{m \in  [1, |I|-1] \cup Q_{I,J,1,2}: (m, \prod_{p \in N}p) \ne 1\}| + 1\\
&\ \ \  \ \ \ \ \ \ \ \ \  \ \  \  \le |[N]^3|+1. \label{-PJP-}
\end{align} 
Here, the first relation, when taken together with the second, is found by combining Lemma \ref{L1} and Lemma \ref{-AAJ-}, itself combined with \eqref{-MIP-}. The first term of the left side of the second relation is found by Lemma \ref{L1}. The second term is found by Lemma \ref{L1A} for its use of zero for the third parameter of $Q_{I,Z,0,3}$ and $Q'_{I,Z,0,3}$, where $Z$ is as in Lemma \ref{L1A} with $Z \in [J]^4$. We note that, in \eqref{-PJP-}, for each $k \in \{0,1\}$ and $d \in \{2,3\}$, when the second parameter of $Q_{I,J,k,d}$ and $Q'_{I,J,k,d}$ is given as $N$ instead of $J$, the result is unchanged. The second term on the right side of \eqref{-PJP-} is found by the fact that the left-hand endpoint, zero, of $[0, |I|-1]$ is not coprime to $\prod_{p \in N}p$.

	  For any $M \in [J]^3$ and $W \in [J]^4$ for which $M \subset W$, let $f_{S_J,M,W}$ be equal to one when $M \in S_J$ and equal to zero when $M \notin S_J$.
	
	When we vary $N$, we see the following. It follows through \eqref{-PJP-} that, for all $W \in [J]^4$ and any $D_W \in [W]^3$, the combination of Lemma \ref{L1} for $H=D_W$, and Lemma \ref{L1A} for $Z= W$ justifies our choosing $S_J$ so that $|[W]^3|-1=3$ elements of $[W]^3$ are not in $S_J$. This is to say, the fact that the right sides of both \eqref{-BHB-} and \eqref{-BIB-} are equal to one implies I, below. 
	\vspace{\baselineskip}
	
	\noindent I. For all $W \in [J]^4$ for which $f_{S_J,D_W,W}=1$, for each $M \in [W]^3 \setminus \{D_W\}$ our assumptions on $S_J$ allow that $f_{S_J,M,W}=0$. 
	\vspace{\baselineskip}
	
	For any $H \in [J]^3$, any $F \in [J]^4$ for which $H \subset F$, and any $T \in [H]^2$, let $g_{S_J,T,H,F} = f_{S_J,H,F}$. Then for any $T'$ in $[J]^2 \setminus \{T\}$, if $ g_{S_J,T,H,F} = 1$ and any $V \in [J]^4$ and $U \in [V]^3$ for which $T' \subset U$, it follows by I that II, below, is true.
	\vspace{\baselineskip}
	
	\noindent II. We have $ g_{S_J,T',U,V} = 1$ only if $T \cap T' = \emptyset$. Otherwise, when $T \cup T' \ne H$ while $T \cap T' \ne \emptyset$, for some distinct $L$ and $L'$ in $[T \cup T' \cup H]^3$ we would have $f_{S_J,L, T \cup T' \cup H} = 1$ and $f_{S_J,L', T \cup T' \cup H}=1$, which is contrary to I, above.
	\vspace{\baselineskip}
	
	\noindent For any subset $K$ of $[J]^2$ for which $|K|=\lfloor|J|/2\rfloor$, combining \eqref{-PJP-} and II (above) gives
	\begin{align}
 \notag  	& |\{m \in I \cup Q'_{I,J,0,2}: (m, \prod_{p \in J}p) \ne 1 \}|  - |\{m \in  [0, |I|-1] \cup Q_{I,J,0,2}: (m, \prod_{p \in J}p) \ne 1\}| \\
	\notag & \ \ \  \ \  \le     {\lfloor|K|/2\rfloor \choose 2} +1\\
  & \ \ \  \ \  < \frac{|J|^2}{8}
\end{align}  
where the final two parameters of $Q_{I,J,0,2}$ and $Q'_{I,J,0,2}$ are justified through the final relation of \eqref{-PJP-} (where we use $Q_{I,J,1,2}$ and $Q'_{I,J,1,2}$), combined with the fact that we have the second term on the right side of \eqref{-PJP-}. We thereby have \eqref{-QIJ-}.
\end{proof}

\subsection{Proof of Theorem \ref{Th1}}

\begin{proof}
Let $A$ and $A'$ be as in Lemma \ref{-BI-}. Let $I$ and $J$ be as in Theorem \ref{Th1}. Then, when $A$ and $A'$ are each intervals with $|A|= |A'|$,
\begin{align}
	\notag & \left|\sum_{m \in A} |[\{p \in J: p \mid m\}]^2| -\sum_{m \in A'} |[\{p \in J: p \mid m\}]^2| \right|\\
	\notag &\ \ \ \  \ \ \  \ \ \ \  \le |[J]^2|\\
 &\ \ \ \  \ \ \  \ \ \ \  =   \frac{|J|(|J|-1)}{2}. 
% \label{-THS-}
\end{align}
Therefore, combining Lemma \ref{-IQM-} and Lemma \ref{-BI-} for $A= I$ and $A'=[0,|I|-1]$ gives
\begin{align}
\notag  & |\{m \in I: (m, \prod_{p \in J}p) \ne 1\}| -|\{m \in [0, |I|-1]: (m, \prod_{p \in J}p) \ne 1\}|\\
\notag &\ \ \ \  \ \ \ \ \ \ \  \ \ \  \ \ \ \ \  \ \ \  \le  \frac{|J|^2}{8} + \frac{|J|(|J|-1)}{2}\\
\notag &\ \ \ \  \ \ \ \ \ \ \  \ \ \  \ \ \ \ \  \ \ \  < \frac{|J|^2}{8} + \frac{|J|^2}{2}\\
\notag &\ \ \ \  \ \ \ \ \ \ \  \ \ \  \ \ \ \ \  \ \ \  = \frac{(2+8)|J|^2}{16} \\
 &\ \ \ \  \ \ \ \ \ \ \  \ \ \  \ \ \ \ \  \ \ \  = \frac{5|J|^2}{8}.  \label{-TGL-}
\end{align}
Since for each $p \in J$, 
\begin{align}
\notag 	|\{m \in I: p \mid m\}| \le |\{0 \le m \le |I|-1: p \mid m\}|,
\end{align}
in \eqref{-TGL-} we have tacitly substituted zero for the two that is the final parameter of $Q_{I,J,0,2}$ and $Q'_{I,J,0,2}$. Condition I in Definition \ref{D.E} enables us to dispense with $Q_{I,J,0,0}$ and $Q'_{I,J,0,0}$, whence \eqref{-SVSE} follows.
\end{proof}

\section{The Folded the Number Scale}

% ** CHANGE TO n >2 throughout?
\subsection{Method outline.} \label{-MO-}
\vspace{\baselineskip}

 Let $y$ be any integer $>1$. Our forthcoming exposition employs mirror symmetry in the context of the interval, $[1,y]$ and, for any integer $n$, the primes in $P(n)$ that divide $y$ and finally the sieve of Eratosthenes, which in turn contextualises our 'fold' of the number scale. For the Goldbach conjecture our interest is in primes $a$ and $b$ in $[1,2y]$ for which $a + b = 2y$. Here, one side, which we may call the lower side, of the fold will be taken to be $[1, y]$; the other, the upper side, will be $[y, 2y]$. Indeed, the context of our use of $y$ will imply a rephrasing of the Goldbach Conjecture, familiar as \emph{every even number greater than two is the sum of two primes}, to the equivalent \emph{every integer greater than three is the arithmetic mean of two primes}. 

For $i$ and $r$ as in our Extended Introduction, consider the set we denoted by $Z_{i,r}$. For both the Goldbach and Twin primes conjectures, the fold of the number scale occurs at $r/2$. When we address the Goldbach conjecture we use $r=2y$. For the Twin Primes conjecture we instead take $r=2$ and $y=p_n^2$. Thus, in contrast to our treatment of the Goldbach conjecture, in which we consider only positive integers, by folding the number scale we tacitly map contiguous negative integers, descending from $-1$, onto positive integers up to $p_n^2$. Then, using the sieve of Eratosthenes to establish coprimality, when we reverse the signage of each of the negative integers the value $|\bigcup Z_{p_n^2,2}|$ will be seen to be the number of primes, $a$, such that $p_n+2< a < p_n^2$, for which $a-2$ is also prime. 

 In short, we shall consider the objects of interest in our folded number scale as being subdivided into two distributions. One such is the distribution of all integers that are coprime to $\prod_{k = 1}^n{p_k}$, where $n$ is any integer. This distribution is the object that is folded, which is to say that when we apply functions to it, they are in the context of the mirror symmetry discussed above. The other is, in effect, an intermediate to the two sides of this folded distribution and one that tacitly invokes the Chinese Remainder theorem.

\subsection{Remark} \label{-r-}
Let $n\ge 1 $ and 
% CHASGE as change z>4 ?
 $I$ be as in Theorem \ref{Th1}. Key to our method is the expression
\begin{align}
\prod_{k=1}^{n}\left(1-\frac{1}{p_k}\right)\prod_{k=2}^{n}\left(1-\frac{1}{p_k-1}\right).	 \label{-HYH-} 
\end{align}
We note that, for any $p \in P(n)$ and any $r$, when $p \mid r$ we have (by virtue of the mirror symmetry, about $r/2$), of the distribution of integer multiples of $p$), 
\begin{align}
 \{m \in I: p \mid m \}= \{m \in I: p \mid m-r\}, \label{YE-}
\end{align} 
but when $p \nmid r$, \eqref{YE-} does not hold, bringing into play the expression
\begin{align}
\prod_{k=1}^n\left(1-\frac{1}{p_k}\right)\prod_{\substack{q \in P(n)\\ q \nmid r}}\left(1-\frac{1}{q-1}\right).	\label{-GHJ-}
\end{align}
Hence the fact that, when $p=2$, 
\begin{align}
	\left(1-\frac{1}{p}\right)\left(1-\frac{1}{p-1}\right) = 0,
\end{align}	% !!
ultimately justifies our imposing the condition in our forthcoming exposition that $r$ is even.

\subsection{Introduction to Theorems \ref{Th2} and \ref{Th3}.} 

For $i$ and $r$ as in our Extended Introduction, recall that $Z_{i,r}$ is the set of all sets, $G$, for which, for each $s \in \{0,r\}$ and $p \in P(\pi(\sqrt{2i}))$, precisely one element of $\{\{ 1 \le m \le i: p \mid m-s\}: s \in \{0,r\}\}$ is in $G$. Recall further that we noted that we shall ultimately use $Z_{i,r}$ in the form $\bigcup (\bigcup Z_{i,r})$. For any integer $n$, we shall show that the proof of the Goldbach conjecture resides fundamentally in taking $r=2z_n$ where $p_n^2/2 < z_n < p_{n+1}^2/2$ and $i=z_n$, and the proof of the Twin Primes conjecture, on which we shall focus in this introduction (since the justification of Theorem \ref{Th3} is essentially founded in the lemmas that prove Lemma \ref{Th2}), taking $r=2$ and $i=p_n^2$. Theorem \ref{Th1} enables us to use \eqref{-ZIR-} as an upper bound for $|\bigcup (\bigcup Z_{i,r})|$.

In what follows, first we use the Euler totient. Second, we implicitly use, multiplicatively, the value $ \prod_{p \in \{P(n): p \nmid r\}} (p-2)$, where $n$ is any integer and $r$ is even. This will enable future use, in the form of \eqref{-HYH-}, of our resulting expression (by substituting $P(n) \setminus \{2\}$ for the implicitly used $\{p \in P(n): p \nmid r \}$ and using our forthcoming \eqref{-HHH}). We note, incidentally, that since $\phi(p)-1=p-2$ while $\phi(p) = p-1$, \eqref{-GHJ-} is equal to $(1/2)\prod_{k=2}^{n}(1-2/p_k)$ when no prime in $P(n) \setminus \{2\}$ divides $r$.

 Key to our use of the bound variable $m$, for $\{m \in I : (m, \prod_{k=1}^n p_k) \ne 1\}$ and for $\{m \in I : (m-r, \prod_{k=1}^n p_k) \ne 1\}$, where $I$ is any interval, is the value $m(m-r)$. Here, the primes in $P(n)$ that divide either $m$ or $m-r$ are also those primes in $P(n)$ that divide $m(m-r)$. Substituting $r=2$ will give our proof of the Twin Primes conjecture. Here,
 \begin{align}
	 \left\{(m, m-2): p_n+2 \le m \le \frac{p_n^2+1}{2}\ \&\ \left(m(m-2), \prod_{k=1}^n p_k\right) = 1 \right\} 
 \end{align}
 is a subset of the set of all $(p, p-2)$ such that $p < p_n^2/2$, and $p$ and $p-2$ are together twin primes.

 \begin{theorem}\label{Th2} 
There are infinitely many pairs, $(p, q)$, of primes such that $p+2=q$.
\end{theorem}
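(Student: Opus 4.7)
My plan is to apply the sieve machinery with $r=2$ and $i=p_n^{2}$, to read off from \eqref{-ZIR-} a lower bound on the count of twin prime pairs below $p_n^{2}$, and to show that this bound diverges with $n$. By the very definition of $Z_{i,r}$, the set $\bigcup(\bigcup Z_{p_n^{2},2})$ is the set of $m\in[1,p_n^{2}]$ such that some prime in $P(n)$ divides $m$ or $m-2$. Its complement $C_n$ in $[1,p_n^{2}]$ is therefore the set of $m$ with $(m(m-2),\prod_{k=1}^{n}p_k)=1$. For every $m\in C_n$ with $m>p_n+2$, both $m$ and $m-2$ lie in $(p_n,p_n^{2}]$ and are coprime to every prime at most $p_n$, so the sieve of Eratosthenes forces each to be prime, yielding a twin prime pair $(m-2,m)$. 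At most $p_n+2$ small values of $m$ are discarded in this reduction.

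Applying \eqref{-ZIR-} to upper bound $|\bigcup(\bigcup Z_{p_n^{2},2})|$ and taking complements in $[1,p_n^{2}]$ gives
\[
|C_n|\ge \frac{p_n^{2}}{2}\prod_{q\in(P(n)\setminus\{2\})\cup\{w_n\}}\Bigl(1-\frac{2}{q}\Bigr),
\]
where $w_n$ is the auxiliary value written below the product in \eqref{-ZIR-}. Subtracting the $O(p_n)$ discarded indices leaves the same leading-order lower bound for the twin prime count up to $p_n^{2}$, so it remains to show that the right-hand side tends to infinity.

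For the asymptotics I would use the elementary identity $1-2/p=(1-1/p)^{2}(1-1/(p-1)^{2})$, valid for all odd $p$, which rearranges the main product term-by-term as
\[
\prod_{q\in P(n)\setminus\{2\}}\Bigl(1-\frac{2}{q}\Bigr)=\Bigl(\prod_{k=2}^{n}\Bigl(1-\frac{1}{p_k}\Bigr)\Bigr)^{2}\prod_{k=2}^{n}\Bigl(1-\frac{1}{(p_k-1)^{2}}\Bigr).
\]
Mertens' theorem makes the first factor asymptotic to $(2e^{-\gamma}/\log p_n)^{2}$, while the second converges to the twin prime constant $C_2=\prod_{p>2}(1-1/(p-1)^{2})$. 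The auxiliary factor $1-2/w_n$ is controlled by the Theorem \ref{Th1} estimate $j_n\le i(1-\prod_{k=1}^{n}(1-1/p_k))+5n^{2}/8$: substitution into the definition of $w_n$ yields $w_n\gg p_n^{2}e^{-\gamma}/(n^{2}\log p_n)$, which by the prime number theorem grows like $\log n$, so $1-2/w_n\to 1$. Combining, $|C_n|\gg p_n^{2}/(\log p_n)^{2}$, which diverges and in particular produces a twin prime pair below $p_n^{2}$ for every sufficiently large $n$, so there are infinitely many in total. The main obstacle is not any single asymptotic — all three are classical — but verifying that the single $w_n$-factor contributed by the quasi-sieve does not degrade the Mertens-type estimate, which is exactly the self-consistency check just outlined.
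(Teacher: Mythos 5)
Your overall route is the paper's: reduce to counting $m\le p_n^2$ with $(m(m-2),\prod_{k\le n}p_k)=1$, bound the sieved union by \eqref{-ZIR-} (which the paper justifies through Lemma \ref{-MMT-}, with $u_{I,n}=1/w_n$), and let Mertens' theorem drive the resulting lower bound to infinity; your factorization $1-2/p=(1-1/p)^2\bigl(1-1/(p-1)^2\bigr)$ is a cosmetic variant of the paper's $(1-1/p)(1-1/(p-1))=1-2/p$.

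The genuine gap is in your control of the quasi-sieve factor. You assert that Theorem \ref{Th1} gives $j_n\le i\bigl(1-\prod_{k=1}^{n}(1-1/p_k)\bigr)+5n^2/8$ and deduce $w_n\to\infty$, hence $1-2/w_n\to 1$. But Theorem \ref{Th1} compares $I$ with the reference interval $[0,|I|-1]$, not with the Euler-product prediction: it gives $j_n\le|\{m\in[0,i-1]:(m,\prod_{k\le n}p_k)\ne 1\}|+5n^2/8$. By the sieve of Eratosthenes that reference count is $i-\pi(i)+O(n)$, and since $\prod_{k\le n}(1-1/p_k)\sim 2e^{-\gamma}/\log i$ with $2e^{-\gamma}\approx 1.123\ne 1$, it exceeds $i\bigl(1-\prod_{k\le n}(1-1/p_k)\bigr)$ by a \emph{main} term of order $(2e^{-\gamma}-1)\,i/\log i$, which dwarfs $5n^2/8=O(i/\log^2 i)$. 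Consequently $w_n$ does not tend to infinity: it tends to the constant $2e^{-\gamma}/(2e^{-\gamma}-1)\approx 9.13$ (equivalently $u_{I,n}\to 1-e^{\gamma}/2\approx 0.109$, which is exactly what the paper's \eqref{-IJN-} encodes via its $j(-1/\log(cj)+\prod_{k=1}^{n}(1-1/p_k))$ term). Your argument therefore still needs the separate verification, which the paper supplies via the explicit Mertens constant, that this limit exceeds $2$ --- i.e.\ that $u_{I,n}<1/2$ --- so that $1-2/w_n$ stays bounded away from $0$. As it happens it does (the factor tends to about $0.78$), so your final order of magnitude survives, but the step you yourself single out as ``the main obstacle'' is precisely the one your estimate gets wrong.
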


\begin{lemma} \label{-BN-} 
Let $s$ be even. Let $J$ be any set of integers for which, for any $z \in J$, $z$ is coprime to $\prod_{d \in J \setminus \{z\}} d$. Then  
\begin{align}
\left|\left\{ 1 \le m \le \prod_{d \in J}d  : \left(m(m-s), \prod_{d \in J} d \right) = 1 \right\}\right|
=\ \phi(\prod_{d \in  J}d) \prod_{\substack{d \in J\\
 d \nmid s }}\left(1-\frac{1}{d-1}\right). \label{-HHH} 
%\notag =&\ \frac{\prod_{k=2}^n (p_k-1)\prod_{k=2}^n (p_k-2)}{\prod_{k=2}^n{p_k}\prod_{k=n}^n (p_k-1)}\\
%\notag =&\  \frac{\prod_{k=2}^n(p_k - 2)}{\prod_{k=2}^n}.
\end{align}
\end{lemma}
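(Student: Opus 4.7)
The plan is to apply the Chinese Remainder Theorem coordinate-wise over the pairwise coprime elements of $J$. The bijection $m \leftrightarrow (m \bmod d)_{d \in J}$ between $\{1, \ldots, \prod_{d \in J}d\}$ and $\prod_{d \in J}\mathbb{Z}/d\mathbb{Z}$ converts the condition $(m(m-s), \prod_{d \in J}d) = 1$ — equivalent, via pairwise coprimality, to the simultaneous per-$d$ conditions $(m(m-s), d) = 1$ — into a product of independent local counts. The formula \eqref{-HHH} will match precisely when each $d \in J$ is prime, which is its role throughout the paper's applications, and I proceed under that reading.

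Next I would case-split on whether $d \mid s$. If $d \mid s$, then $m - s \equiv m \pmod d$, so the local condition collapses to $m \not\equiv 0 \pmod d$, contributing $d - 1 = \phi(d)$ valid residues. If $d \nmid s$, the condition becomes $m \not\equiv 0$ and $m \not\equiv s \pmod d$; because $s$ is even we have $d \ne 2$ in this subcase, so $0$ and $s$ are genuinely distinct nonzero residues mod $d$, contributing $d - 2 = \phi(d)\left(1 - \tfrac{1}{d-1}\right)$ valid residues. Multiplying the local counts across $d \in J$, and invoking multiplicativity of $\phi$ on pairwise coprime arguments, yields
\begin{align}
\notag \prod_{d \in J}\phi(d) \cdot \prod_{\substack{d \in J\\ d \nmid s}}\left(1-\frac{1}{d-1}\right) = \phi\left(\prod_{d \in J}d\right)\prod_{\substack{d \in J\\ d \nmid s}}\left(1-\frac{1}{d-1}\right),
\end{align}
which is exactly the right side of \eqref{-HHH}.

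The hypothesis that $s$ is even is used only to rule out the degenerate configuration $d = 2,\, d \nmid s$, which would produce the vanishing factor $d - 2 = 0$ — correctly reflecting that when $s$ is odd, exactly one of $m$ and $m - s$ is even, leaving no residue coprime to $2$. Under the evenness hypothesis, if $2 \in J$ then $2 \mid s$ and the prime $2$ falls into the first subcase, contributing the benign factor $\phi(2) = 1$. This is precisely the role for the evenness condition foreshadowed in Remark \ref{-r-}. No substantive obstacle is expected beyond the careful handling of this case split.
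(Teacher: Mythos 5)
Your proof is correct and follows essentially the same route as the paper's: a per-element residue count (each $d\mid s$ contributes $d-1$ admissible residues, each $d\nmid s$ contributes $d-2$), multiplied together via the Chinese Remainder Theorem over the pairwise coprime elements of $J$. Your explicit observations — that the identity $\phi(d)=d-1$ forces each $d\in J$ to be prime for \eqref{-HHH} to hold as stated, and that evenness of $s$ is needed only to keep $2$ out of the $d\nmid s$ case — are both tacit in the paper's version, so your write-up is, if anything, the more careful one.
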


\begin{proof} 
For each $q \in J$ for which $q \nmid s$ and all $i \in \mathbb{N}$, 
\begin{align}
|\{1+i \le m \le q+i: (m(m-s), q) = 1\}|=\ |\{1+i \le m \le q+i: (m, q) = 1 \}|-1. \label{-XX-}
\end{align}
 Since the first term on the right side of \eqref{-XX-} is equal to $\phi(q) =q-1$ and the left side is equal to $\phi(q)-1 =q-2$, and since for any real $x$, $1-1/x = (x-1)/x$, taking $x= q-1$ for our final line we have
\begin{align} 
\notag 
 |\{ 1 \le m \le \prod_{d \in J}d  : (m(m-s), \prod_{d \in J}d)  = 1\}|  =&\  \phi(\prod_{d \in J}d) \prod_{\substack{d \in J\\
d \nmid s}}\frac{d-2}{d-1}\\
=&\ \phi(\prod_{d \in J} d) \prod_{\substack{d \in J\\
d \nmid s }} \left(1-\frac{1}{d-1}\right)
\end{align} % !!
which completes the proof.
\end{proof}

%\subsection{Corollary to Lemma \ref{-BN-}}
%
%Let $J$ be as in Lemma \ref{-BN-}. For any subset $V$ of $J$, let $Z_{J,V}$ be the set of elements of $J$ such that 
%\begin{align}
%	\prod_{p \in Z_{J,V}} \left(1-\frac{1}{p}\right) \prod_{p \in J \setminus V} \left(1-\frac{1}{p} \right) = \prod_{p \in Y} \left(1-\frac{1}{p}\right).
%\end{align} % !!
%Then $Z_{J,V} = V$. Furthermore, for any set $G$ of disjoint, nonempty subsets of $Z_{J,V} \setminus \{2\}$ for which 
% USE Y is any subset of $P(n)$; Y=L(D).
%\begin{align}
%\notag &	\prod_{q \in \left\{1-\prod_{p \in F} \left(1-\frac{1}{p}\right): F \in G\right\}} (1-2q) \prod_{p \in J \setminus V}
% \left(1-\frac{1}{p} \right)	\prod_{p \in J \setminus \{2,V\}} \left(1-\frac{1}{p-1}\right) \\
	%
%	&\ \ \ \ \ \ \ \  \ \ \ \ \ \ \ \ \ \ = \prod_{p \in (J \setminus V) \cup \bigcup G } \left(1-\frac{1}{p}\right) \prod_{p \in (J \setminus \{2,V\}) \cup \bigcup G} \left(1-\frac{1}{p-1} \right), \label{-BYE-}
%\end{align}  % !!
%$G$ is a set of one-element sets. 

\begin{lemma} \label{-BM-}
Let $s$ be even. Let $J$ be as in Lemma \ref{-BN-} with the additional conditions that $2 \in J$ and more than one element of $J$ divides $s$. Let $t$ be any integer for which two is the sole element of $J$ that divides $t$. Then 
 \begin{align}
 \notag  &  |\{1 \le m \le \prod_{d \in J} d: (m(m-s), \prod_{d \in J}d)=1\}|\\
 \notag  &\ \  \ \ \ \ \ \ \  \ \ \ \ \ \ \  \ \ \ \  =\ \prod_{d \in J} d \prod_{d \in J} \left(1-\frac{1}{d}\right) \prod_{\substack{d \in J \setminus \{2\}\\ d \mid s}} \left(1-\frac{1}{d-1}\right)\\
  \notag &\ \  \ \ \ \ \ \ \  \ \ \ \ \ \ \  \ \ \ \ >\ |\{1 \le m \le \prod_{d \in J} d: (m(m-t), \prod_{d \in J}d)=1\}|\\
 &\ \  \ \ \ \ \ \ \  \ \ \ \ \ \ \  \ \ \ \  =\ \prod_{d \in J} d \prod_{d \in J} \left(1-\frac{1}{d}\right) \prod_{d \in J \setminus \{2\}} \left(1-\frac{1}{d-1}\right).
\label{-GS}
\end{align}
\end{lemma}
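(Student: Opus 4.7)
The plan is to obtain both equalities as direct specialisations of Lemma \ref{-BN-} and then read off the strict inequality by comparing the two resulting expressions factor-by-factor. Applying Lemma \ref{-BN-} to the $s$-count yields
\[
\phi\left(\prod_{d \in J} d\right) \prod_{\substack{d \in J \\ d \nmid s}}\left(1 - \frac{1}{d-1}\right),
\]
and because $s$ is even and $2 \in J$, the index set $\{d \in J : d \nmid s\}$ actually lies in $J \setminus \{2\}$. Expanding $\phi\left(\prod_{d \in J} d\right) = \prod_{d \in J} d \prod_{d \in J} (1 - 1/d)$, which is valid by the pairwise coprimality of the elements of $J$, then rearranges to the first stated equality. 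The same application of Lemma \ref{-BN-} with $t$ in place of $s$, using that $2$ is the only element of $J$ dividing $t$, gives the second equality with the outer product running over all of $J \setminus \{2\}$.

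For the strict inequality, I would compare the two right-hand sides term-by-term. They share the common factor $\prod_{d \in J} d \prod_{d \in J} (1 - 1/d)$ and differ only in the final product. The $t$-expression contains a factor $(1 - 1/(d-1))$ for every $d \in J \setminus \{2\}$, whereas the $s$-expression retains this factor only for those $d \in J \setminus \{2\}$ that do not divide $s$. By hypothesis at least two elements of $J$ divide $s$; since $2$ is one of them, at least one $d \in J \setminus \{2\}$ with $d \ge 3$ must divide $s$, and for any such $d$ the omitted factor $(1 - 1/(d-1)) \in (0,1)$ makes the $s$-count strictly larger than the $t$-count.

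I do not anticipate any serious obstacle: the argument is essentially bookkeeping built on Lemma \ref{-BN-}, and the strict inequality is witnessed concretely by at least one extra factor of $(d-1)/(d-2) > 1$ that the $s$-side enjoys over the $t$-side. The only care needed is around $d = 2$: because $s$ is even, $2$ always divides $s$, so the degenerate factor $(1 - 1/(2-1)) = 0$ that would otherwise annihilate Lemma \ref{-BN-} is never triggered on either side, which is precisely what licenses the explicit exclusion of $\{2\}$ from the outer products in the statement.
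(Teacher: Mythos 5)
Your proof is correct and follows essentially the same route as the paper: both apply Lemma \ref{-BN-} to the $s$- and $t$-counts, expand $\phi(\prod_{d \in J} d) = \prod_{d \in J} d \prod_{d \in J}(1-1/d)$, and obtain the strict inequality from the proper containment $\{d \in J : d \nmid s\} \subset \{d \in J : d \nmid t\} = J \setminus \{2\}$ together with the fact that each omitted factor $(1-1/(d-1))$ lies in $(0,1)$ for $d \ge 3$. One remark: what you (and the paper's own proof) actually establish is the first equality with the final product taken over $d \in J \setminus \{2\}$ with $d \nmid s$, so the condition $d \mid s$ in the displayed statement of \eqref{-GS} appears to be a typo rather than a gap in your argument.
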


\begin{proof}
We have $\{d \in J: d \nmid s\} \subset \{d \in J: d \nmid t\}$. Therefore
\begin{align}
	\prod_{\substack{d \in J\\
	d \nmid s }} \left(1-\frac{1}{d-1}\right) >\ \prod_{\substack{d \in J\\
	d \nmid t }} \left(1-\frac{1}{d-1}\right). \label{-DJD-}
\end{align}
We note that the right side of the second relation of \eqref{-GS} is equal to $\phi(\prod_{d \in J} d) \prod_{\substack{d \in J \setminus \{2\} }} (1-1/(d-1))$. Since $\prod_{d \in J}d \prod_{d \in J} (1-1/d) = \phi(\prod_{d \in J}d)$, combining \eqref{-DJD-} and Lemma \ref{-BN-} for $s$ as current therefore gives \eqref{-GS}.
\end{proof}

\subsection{Definition.} \label{D.A}
For any even $r$, any $n \ge 1$, any set $N$ of integers, any $p \in P(n)$, and any integer $k$ define
\begin{align}
V_{N,p,k} = \{h \in N:\ p \mid h-k \}.
% !!
\end{align} 
For any set $M$ of integers such that, for any distinct $p$ and $q$ in $P(n)$ and any $s$ and $s'$ in $\{0,r\}$,
$V_{M,p,s} \ne V_{M,q,s'}$, define
\begin{align}
\notag R(M,n,& r) \\
\notag =&\ \{F \subseteq \{V_{M,p,k}: p \in P(n)\ \&\ k \in \{0, r \}\}: \text{ for each } q \in P(n) \text{ we have } \\
 &\ \ \  \ \ \ 
|\{V_{M,q,k}: k \in \{0, r\}\} \cap F| = 1 \}.
\end{align} % !!  
Hence $R(M,n,r)$ is the set of all sets, $G$, such that each $T \in G$ satisfies $T = \{m \in M: q \mid m-j\}$ for some $q \in P(n)$ and $j \in \{0,r\}$, and for each $p \in P(n)$, precisely one element of $\{\{m \in M: p \mid m-s\}: s \in \{0,r\}\}$ is in $G$. We note that $N$ satisfies all stated conditions on $M$ when $N$ is an interval with $|N| \ge 2p_n$. We note further that, for $i$, $r$ and $Z_{i,r}$, as in our Extended Introduction, $Z_{i,r} = R([1, i], \pi(\sqrt{2i}), r)$ when $[1,i]$ satisfies all stated conditions on $M$. Finally,
% for any $K \subseteq \bigcup   R(M,n,r)$,
%, and any $p \in P(n)$ for which $\{\{m \in M : p \mid m-s\}: s \in \{0,r\}\} \cap K \neq \emptyset$, any integer $i$, any $s \in \{0,r\}$, let $g_{i, p}$ be equal to $i$ such that $j \in \{m \in M: p \mid m-s\}$ and  $\{m \in M: p \mid m-s\} \in K$ for some $s \in \{0,r\}$. Then
\begin{align}
	\bigcup (\bigcup R(M,n,r)) =\ \{m \in M: (m(m-r), \prod_{k=1}^n p_k) \ne 1\}.
\end{align} % !!

\begin{lemma}  \label{-RIJ-}
 For any integer $n$, any even $r$, any $j$ for which there exists $R([1, j], n, r)$, and any $T \in R([1, j], n, r)$ there exists an interval $J_T$ for which $|J_T|=j$ and
\begin{align}
	|\bigcup T| = |\{m \in J_T: (m, \prod_{k=1}^n p_k) \ne 1\}|. \label{-JTM-}
\end{align}
\end{lemma}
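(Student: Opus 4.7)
The plan is to apply the Chinese Remainder Theorem to realize $J_T$ as an integer translate of $[1, j]$. Given $T \in R([1, j], n, r)$, the defining property of $R$ singles out, for each $k \in \{1, \dots, n\}$, a unique $s_k \in \{0, r\}$ with $V_{[1,j], p_k, s_k} = \{m \in [1, j] : p_k \mid m - s_k\} \in T$. Since $p_1, \dots, p_n$ are pairwise coprime, CRT furnishes a positive integer $c$ satisfying $c \equiv -s_k \pmod{p_k}$ for every $k \in \{1, \dots, n\}$. I would then take $J_T := [1 + c,\, j + c]$, which is an interval of cardinality $j$.

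The verification goes through the shift map $\sigma : J_T \to [1, j]$ given by $\sigma(m) = m - c$, which is a bijection. For any $m \in J_T$ and any $k$, the congruence $c \equiv -s_k \pmod{p_k}$ yields $p_k \mid m$ if and only if $p_k \mid \sigma(m) - s_k$; hence $\sigma$ restricts to a bijection between $\{m \in J_T : p_k \mid m\}$ and $V_{[1,j], p_k, s_k}$ for each $k$. Taking the union over $k \in \{1, \dots, n\}$ on both sides produces a bijection between $\{m \in J_T : (m, \prod_{k=1}^n p_k) \ne 1\}$ and $\bigcup T$, which is exactly the equality \eqref{-JTM-}.

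No real obstacle presents itself: the lemma is essentially a bookkeeping consequence of CRT, and the only thing that needs checking is that the shift $c$ selected above transports the congruence class ``$\equiv s_k \pmod{p_k}$'' in $[1, j]$ onto the class ``$\equiv 0 \pmod{p_k}$'' in $J_T$, which is immediate from $c \equiv -s_k \pmod{p_k}$. The one housekeeping point is that the hypothesis ``$R([1, j], n, r)$ exists'' includes the non-degeneracy condition on $[1, j]$ from Definition \ref{D.A}; this ensures that each $T \in R([1,j], n, r)$ really does determine the tuple $(s_1, \dots, s_n)$ unambiguously, so that the system of congruences solved by CRT is well-posed.
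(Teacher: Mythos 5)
Your proof is correct and follows essentially the same route as the paper: the paper likewise constructs the interval as a translate $[i_T,\, j+i_T-1]$ of $[1,j]$ obtained by solving the simultaneous congruences (built up one prime at a time via ``known modular arithmetic,'' i.e.\ CRT) so that the multiples of each $p_k$ in the translated interval correspond to the chosen residue class in $T$. Your version with the explicit shift bijection $\sigma(m)=m-c$ is a cleaner statement of the same argument.
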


\begin{proof}
For any $p \in P(n)$ and any $v>0$, for some $j$-element interval $W_T$, the $v$-th lowest element of $\bigcup (\{\{1 \le m \le j : p \mid m-s\}: s \in \{0,r\}\} \cap T)$ is equal to $(v-1)p +  \min \{m \in W_T : p \mid m\} - \min W_T +1$. For any nonempty subset $B$ of $P(n)$, we may write $B = \{A_1, A_2, \ldots, A_{|B|}\}$ where, for each $1 \le m \le |B|-1$ we have $A_{m} \subset A_{m+1}$. By progressively increasing $|B|$ by increments of one from $|B|=1$ we see, through known modular arithmetic combined with the fact that $\prod_{p \in A_{m+1}}p / \prod_{p \in A_{m}} p$ is the element of $A_{m+1} \setminus A_m$, that we have the following. There exists an integer $i_T$ for which $1 \le i_T \le \prod_{k=1}^n p_k$, and for all $q \in P(n)$, $\{i_T \le m \le  j+i_T -1: q \mid m\} =\ T$.
% we may substitute $v=1$ thereby implies that there exists an integer $i_T$ for which $1 \le i_T \le \prod_{k=1}^n p_k$, such that for any $v'>0$,
%\begin{align}
%	\notag & \{\{(v'-k)q +  \min \{i_T \le m \le j + i_T  -1: q \mid m\} \\ 
%\notag &\  \ \ \ \ \  \ \ \  \ \ \ \ \  \ \ \ \ \ 	- i_T + 1: 1 \le k \le |\{i_T \le m \le j+i_T-1 : q \mid m\}|\}: q \in P(n)\}\\
%&\ \  \  \ \  \ \ \  \  = T.
%\end{align} % !!
Thus $[i_T, j+i_T-1]$ satisfies all stated conditions on $J_T$, giving \eqref{-JTM-}.
\end{proof}

\begin{lemma} \label{-CAP-}
For any $n$, let $b$ be any positive integer that has no factors in $P(n)$. 
% USE c= ab
Then for any subset $V$ of $P(n)$ 
%and all subsets, $K$, of $V \cup \{b\}$,
\begin{align}
	\notag & |\{1 \le m \le \prod_{q \in P(n) \cup \{b\}} q:(m, \prod_{q \in V \cup \{b\}} q) \ne 1\}\\
	\notag &\ \ \  \ \ \ \ \ \ \ \  \ \ \ \ \cap\ \{1 \le m \le \prod_{q \in P(n) \cup \{b\}} q:   (m, \prod_{q \in P(n) \setminus V} q) \ne 1\}|\\
	%
	% \notag &\ \ \  \ \ \ \ \ =\  
	% 
	% \prod_{q \in K}q\left(1-\prod_{w \in V \cup \{c, \prod_{q \in K}q\}}
	% \left(1-\frac{1}{w} \right)\right)
	% |\{v + m\prod_{q \in V \cup \{b\}} q  :
% 1 \le v \le \prod_{q \in V \cup \{b\}} q\ \&\   
% m \in \mathbb{N}\ \&\ \prod_{q \in K}q \mid v + m\prod_{q \in V \cup \{b\}} q\}\\
%	\notag &\ \ \  \ \ \ \ \ \ \ \  \ \ \ \ \cap\ \{1 \le m \le \prod_{q \in P(n) \cup \{b\}} q:   \ (m, \prod_{q \in P(n) \setminus V} q) \ne 1\}|\\
\notag	&\ \ \  \ \ \ \ \ =\ 
	\left(1-\prod_{q \in V \cup \{b\}} \left(1-\frac{1}{q}\right)\right)\\
	&\ \  \ \ \ \ \ \ \ \ \ \ \ \  \times |\{1 \le m \le \prod_{q \in P(n) \cup \{b\}} q: (m, \prod_{q \in P(n) \setminus V} q) \ne 1\}|.
	%
	% \prod_{q \in P(n) \cup \{b\}} q-  \prod_{q \in V \cup \{b\}}q \phi(\prod_{q \in P(n) \setminus V} q).
	\label{-MQV-}
\end{align}
\end{lemma}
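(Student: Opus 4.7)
The plan is to invoke the Chinese Remainder Theorem to reduce the intersection in \eqref{-MQV-} to a product of two independent counts. Write $m_1 := \prod_{q \in V \cup \{b\}} q$, $m_2 := \prod_{q \in P(n) \setminus V} q$, and $N := \prod_{q \in P(n) \cup \{b\}} q$. Two facts follow immediately from the hypotheses: the sets $V \cup \{b\}$ and $P(n) \setminus V$ are disjoint subsets of $P(n) \cup \{b\}$, and $b$ has no prime factor in $P(n)$. Consequently $\gcd(m_1, m_2) = 1$ and $m_1 m_2 = N$.

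First, I would set $A := \{1 \le m \le N : (m, m_1) \ne 1\}$ and $B := \{1 \le m \le N : (m, m_2) \ne 1\}$, so that the intersection on the left side of \eqref{-MQV-} is exactly $A \cap B$ and the second factor on the right is $|B|$. The target identity becomes $|A \cap B| = (|A|/N)\,|B|$, and the first factor on the right of \eqref{-MQV-} is $|A|/N$: by inclusion–exclusion over the primes dividing $m_1$, the proportion of residues modulo $m_1$ coprime to $m_1$ is $\prod_{q \in V \cup \{b\}}(1 - 1/q)$, so the proportion not coprime to $m_1$ is $1 - \prod_{q \in V \cup \{b\}}(1 - 1/q)$; since $m_1 \mid N$, this proportion is realized exactly on $[1, N]$.

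Second, I would apply the Chinese Remainder Theorem to factor $|A \cap B|$. The map $m \mapsto (m \bmod m_1,\ m \bmod m_2)$ is a bijection $\{1, \ldots, N\} \to \mathbb{Z}/m_1 \times \mathbb{Z}/m_2$. Membership in $A$ depends only on the first coordinate and membership in $B$ only on the second, so with $A_1 := \{r \in \mathbb{Z}/m_1 : (r, m_1) \ne 1\}$ and $A_2 := \{r \in \mathbb{Z}/m_2 : (r, m_2) \ne 1\}$ we obtain $|A \cap B| = |A_1|\,|A_2|$. Since $|A| = (N/m_1)|A_1|$ and $|B| = (N/m_2)|A_2|$, this rearranges to $|A \cap B| = |A|\,|B|/N = (|A|/N)\,|B|$, which is exactly \eqref{-MQV-}.

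I do not anticipate a substantive obstacle: the content is the standard CRT-independence of the two divisibility events combined with the usual totient-density calculation, and nothing here uses the finer machinery of Theorem \ref{Th1} or the preceding lemmas. The only point requiring explicit verification is $\gcd(m_1, m_2) = 1$, which uses both the set-theoretic disjointness of $V \cup \{b\}$ and $P(n) \setminus V$ inside $P(n) \cup \{b\}$ and the hypothesis that no prime in $P(n)$ divides $b$.
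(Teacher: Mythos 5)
Your proof is correct and follows essentially the same route as the paper: the paper partitions $[1,N]$ into the arithmetic progressions $v + m\prod_{q \in V \cup \{b\}} q$ (i.e.\ residue classes modulo $m_1$), observes that non-coprimality to $m_1$ depends only on the class and that each class contains the same number of elements non-coprime to $m_2$, which is exactly the CRT-independence you make explicit via the bijection onto $\mathbb{Z}/m_1 \times \mathbb{Z}/m_2$. Your version is a cleaner write-up of the same argument, and your closing remark correctly isolates the one hypothesis doing real work, namely $\gcd(m_1,m_2)=1$.
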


\begin{proof}
We have I to III, below.
\vspace{\baselineskip}

\noindent I. For any distinct $v$ and $w$ in $[1, \prod_{q \in V \cup \{b\}} q]$ we have
  \begin{align}
 \notag \{v + m\prod_{q \in V \cup \{b\}} q:
		%1 \le v \le \prod_{q \in V \cup \{b\}} q\ \&\   
		m \in \mathbb{N}\} \cap 
	\{w + m\prod_{q \in V \cup \{b\}} q:
		%1 \le v \le \prod_{q \in V \cup \{b\}} q\ \&\   
	m \in \mathbb{N}\} = \emptyset.
\end{align} 
 Therefore, the set whose cardinality is left side of \eqref{-MQV-} is equal to
\begin{align}
	\notag & \{v' + m\prod_{q \in V \cup \{b\}} q :
1 \le v' \le \prod_{q \in V \cup \{b\}} q\ \&\ (v' + m\prod_{q \in V \cup \{b\}} q, \prod_{q \in V \cup \{b\}} q) \ne 1 \ \&\ m \in \mathbb{N}\}\\
 &\ \ \  \ \ \ \ \ \ \ \  \ \ \ \ \cap\ \{1 \le k \le \prod_{q \in P(n) \cup \{b\}} q:  (k, \prod_{q \in P(n) \setminus V} q) \ne 1\}. \label{-QVB-}
	\end{align}
	Here we note that, for the above set, when the bound variable $m \in \mathbb{N}$ is replaced with $m \in [1, \prod_{p \in P(n) \setminus V} p]$, the set is unchanged. \vspace{\baselineskip}

\noindent II. Let $B$ be the set of all integers that satisfy all stated conditions on $v$. Then for the set, $L$, of all $i \in B$ for which $(i, \prod_{q \in V \cup \{b\}}q) \ne 1$, we have $|L| =(1-\prod_{q \in V \cup \{b\}}(1-1/q))|B|$.
\vspace{\baselineskip}

\noindent III. For any $y \in \{v, w\}$, let
\begin{align}
	\notag S_y =&\ \{y + m\prod_{q \in V \cup \{b\}} q:	1 \le	m \le \prod_{q \in P(n) \setminus V}q\ \&\ (y + m\prod_{q \in V \cup \{b\}} q, \prod_{q \in P(n) \setminus V}q) \ne 1 \}.
\end{align}
Then $|S_v|=|S_w|$ with $S_v \cap S_w = \emptyset$.
		\vspace{\baselineskip}

\noindent  Combining all of I, II and III gives \eqref{-MQV-}.
\end{proof}

\subsection{Definition.} \label{D.U.I}

For any $n\ge 4$ and any set $N$ of integers for which $R(N,n,t)$ exists, let $u_{N, n}$ be any rational number such that, for each $T \in R(N,n,t)$,
\begin{align}
	\notag  |\bigcup T| \le\ |N|\left(1- (1-u_{N,n})\prod_{k=1}^n\left(1-\frac{1}{p_k}\right)\right).
\end{align}
It follows through Lemma \ref{-RIJ-} for $n$ as current, combined with Theorem \ref{Th1} for $I=N$ and $|I|=j$ and $J=P(n)$ that, for each $c \in \{1,2\}$, our assumptions on $u_{I,n}$ allow us that, when $j \ge 17$,
\begin{align}
u_{I,n}	=\ \frac{j\left(-\frac{1}{\log{cj}} + \prod_{k=1}^n\left(1-\frac{1}{p_k}\right)\right)  + n + \frac{5n^2}{8} }{j\prod_{k=1}^n\left(1-\frac{1}{p_k}\right)}. \label{-IJN-}
\end{align}
The term $n$ on the numerator is attributable to the fact that the first $n$ primes are not coprime to $\prod_{k=1}^n p_k$. Also, $x/\log{x}$, and thereby $j/\log{cj}$, is a lower bound for the prime counting function for all $x \ge 17$.\cite{2} 

\subsection{Remark}
In the ensuing exposition we occasionally introduce sets of a fixed cardinality. This is because of the self-explanatory nature of the written set.

\begin{lemma} \label{-XMT-}
Let $n$ be any positive integer. Let $r$ be even. Let $I$ be any interval for which there exists $R(I,n,r)$. 
Let $j$ be any element of $P(n) \setminus \{2\}$ for which $j \nmid r$. Let $X \in R(I,n,r)$. Let $s$ be the element of $\{0,r\}$ for which $\{m \in I: j \mid m-s\} \in X$. Let $s' \in \{0,r\} \setminus \{s\}$. Finally, let 
 \begin{align}
	\notag & f(X,j,n,r) =\ (X \setminus  \{m \in I: j \mid m-s\}) \cup \{m \in I: j \mid m-s'\}.
 \end{align}
 Then 
\begin{align}
	|\bigcup (X \cup f(X,j,n,r))| \le\ |I| \left(1-(1-2u_{I,n})\prod_{k=1}^n\left(1-\frac{1}{p_k}\right)\right). \label{-XXQ-}
\end{align}
\end{lemma}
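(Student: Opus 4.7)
The plan is to apply inclusion-exclusion to $\bigcup X$ and $\bigcup f(X, j, n, r)$, exploit the set-theoretic fact that they differ only in their $j$-sieve, and use the defining bound for $u_{I, n}$ twice.

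First, write $A_p := \{m \in I : p \mid m - s_p\}$ for the $p$-sieve of $X$ (so $A_j = \{m \in I : j \mid m - s\}$) and $B_j := \{m \in I : j \mid m - s'\}$. Then $f(X, j, n, r) = (X \setminus \{A_j\}) \cup \{B_j\}$, yielding the decompositions $\bigcup X = A_j \cup \bigcup_{p \ne j} A_p$ and $\bigcup f(X, j, n, r) = B_j \cup \bigcup_{p \ne j} A_p$. I would isolate the crucial disjointness $A_j \cap B_j = \emptyset$: since $\{s, s'\} = \{0, r\}$ with $s \ne s'$, any $m$ with $j \mid m-s$ and $j \mid m-s'$ would force $j \mid r$, contradicting the hypothesis $j \nmid r$. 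Distributivity then gives
\[
\bigcup X \cap \bigcup f(X, j, n, r) = \bigcup_{p \in P(n) \setminus \{j\}} A_p.
\]

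Next, because both $X$ and $f(X, j, n, r)$ are elements of $R(I, n, r)$, the defining inequality of $u_{I, n}$ supplies $|\bigcup X|,\, |\bigcup f(X, j, n, r)| \le |I|(1 - (1 - u_{I, n}) \prod_{k=1}^n (1 - 1/p_k))$. Inclusion-exclusion gives
\[
|\bigcup(X \cup f(X, j, n, r))| = |\bigcup X| + |\bigcup f(X, j, n, r)| - \bigl|\bigcup_{p \ne j} A_p\bigr|,
\]
so substituting the two $u_{I,n}$-bounds and simplifying algebraically reduces the target inequality \eqref{-XXQ-} to the single lower bound
\[
\bigl|\bigcup_{p \in P(n) \setminus \{j\}} A_p\bigr| \ge |I|\Bigl(1 - \prod_{k=1}^n (1 - 1/p_k)\Bigr).
\]

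Establishing this lower bound is the principal obstacle. My proposed route is to view $\bigcup_{p \ne j} A_p$, via a CRT-type correspondence parallel to the one used in Lemma \ref{-RIJ-}, as a sifted set in an interval of length $|I|$ sieved by the $n - 1$ primes in $P(n) \setminus \{j\}$. Theorem \ref{Th1} then controls the discrepancy between this count and the corresponding count in $[0, |I| - 1]$, while the $5n^2/8$ slack already built into the definition of $u_{I, n}$ should absorb the residual difference between $\prod_{p \ne j}(1 - 1/p)$ and $\prod_k(1 - 1/p_k)$. The delicate part is reconciling the sieve-theoretic discrepancy with the $u_{I, n}$-slack uniformly in $j \in P(n) \setminus \{2\}$; I expect the case of small $j$ (particularly $j = 3$) to be where the estimates are tightest and where the hypothesis $n \ge 4$ in the definition of $u_{I,n}$ must be used in full.
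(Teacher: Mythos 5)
Your set-theoretic groundwork is sound: the disjointness $A_j \cap B_j = \emptyset$ (forced by $j \nmid r$), the identity $\bigcup X \cap \bigcup f(X,j,n,r) = \bigcup_{p \in P(n)\setminus\{j\}} A_p$, and the inclusion--exclusion step are all correct, and the first of these is exactly the observation the paper's own proof turns on. The paper, however, organises the estimate as $|\bigcup(X \cup f(X,j,n,r))| = |\bigcup X| + |B_j| - |B_j \cap \bigcup X|$ and tries to show that the single \emph{added} set $B_j \setminus \bigcup X$ contributes at most one further $u_{I,n}|I|\prod_{k=1}^n(1-1/p_k)$; it does not spend the $u_{I,n}$-slack on $|\bigcup X|$ and $|\bigcup f(X,j,n,r)|$ separately and then try to recover via a lower bound on the intersection, which is what you do.

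That difference is fatal to your route, not merely a delicate point. Your reduction requires
\[
\Bigl|\bigcup_{p \in P(n)\setminus\{j\}} A_p\Bigr| \ \ge\ |I|\Bigl(1-\prod_{k=1}^n\Bigl(1-\frac{1}{p_k}\Bigr)\Bigr),
\]
but the left side is a union of one residue class modulo each of only the $n-1$ primes of $P(n)\setminus\{j\}$; by the very CRT/Lemma \ref{-RIJ-} correspondence you invoke, its main term is $|I|\bigl(1-\prod_{p \in P(n)\setminus\{j\}}(1-1/p)\bigr)$. Since $\prod_{p \ne j}(1-1/p) = \frac{j}{j-1}\prod_{k=1}^n(1-1/p_k)$, this falls short of the bound you need by
\[
\frac{|I|}{j-1}\prod_{k=1}^n\Bigl(1-\frac{1}{p_k}\Bigr),
\]
which by Mertens is of order $|I|/(j\log p_n)$. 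For the intervals to which the lemma is actually applied ($|I| \asymp p_n^2$) this is of order $n^2\log n/j$, which for small $j$ (in particular $j=3$) dwarfs the $5n^2/8$ slack from Theorem \ref{Th1} that you hope will absorb it. So the intermediate inequality you reduce to is false. The loss comes from decoupling: when $|\bigcup_{p\ne j}A_p|$ is small, $|\bigcup X|$ and $|\bigcup f(X,j,n,r)|$ are correspondingly small, but applying the worst-case $u_{I,n}$-bound to each of them independently discards that correlation. Any repair must bound the single increment $|B_j \setminus \bigcup X|$ directly, which is the (itself rather loosely argued) strategy of the paper.
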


\begin{proof}
First, we note that $f(X,j,n,r) \in R(I,n,r)$.
Also, 
\begin{align}
	\frac{\max\{|\bigcup Y|: Y \in R(I,n,r)\} - |I|\left(1-\prod_{k=1}^n\left(1-\frac{1}{p_k}\right)\right)}{|I|\prod_{k=1}^n\left(1-\frac{1}{p_k}\right)}, \label{-TJI-}
\end{align}
satisfies all stated conditions on $u_{I,n}$.
Here, when $1/u_{I,n} \notin P(n)$ we have
\begin{align}
|\bigcup X| \le\ |I|\left(1-\prod_{q \in P(n) \cup \{1/u_{I,n}\}} \left(1-\frac{1}{q}\right)\right).
\end{align} % !!
. 

Let $U_{X,j,n,r}$ be the set that satisfies either i or ii, below.
\vspace{\baselineskip}

\noindent i. When $|\bigcup X| \ge \lfloor|I|(1-\prod_{k=1}^n (1-1/p_k))\rfloor$, then $U_{X,j,n,r}$ is the set of all $\lfloor|I|(1-\prod_{k=1}^n (1-1/p_k))\rfloor$-element subsets, $M$, of $\bigcup X$ and $\bigcup f(X,j,n,r)$ for which 
\begin{align}
	& \notag |M \setminus \{m \in I: \{p \in P(n): p \mid m(m-r)\}= \{j\}\}|\\
	\notag &\   \ \ \ \ \ \ \ =\ \max\left\{|L \setminus \{m \in I:\{p \in P(n): p \mid m(m-r)\}= \{j\}\}| : \vphantom{\prod_{z}^z \left(1-\frac{z}{z}\right)} L \subseteq \bigcup K \text{ for some}  \right. \\
	&\   \ \ \ \ \ \ \  \ \ \ \ \  \left. K \in \{X,f(X,j,n,r)\}\ \&\ |L| =  \left \lfloor|I|\left(1-\prod_{k=1}^n \left(1-\frac{1}{p_k}\right)\right)\right \rfloor\right\}.
\end{align}
\vspace{\baselineskip}

\noindent ii. When $|\bigcup X| < \lfloor|I|(1-\prod_{k=1}^n (1-1/p_k))\rfloor$, then $U_{X,j,n,r} = \{\bigcup X\}$.
\vspace{\baselineskip}

%Then we have two possible cases, Case A and Case B, depending on whether there exists $S \in U_{X,j,n,r}$ that satisfies \eqref{-MUP-}, below.
%\vspace{\baselineskip}
%
%Case A: For some $S \in U_{X,j,n,r}$,
%\begin{align}
%	\notag & (\bigcup (X \cup \{\{m \in I: j \mid m-z\}: z \in \{0,r\}\})) \setminus S\\
% 	&\ \  \ \ \ \ \ \ \  \ \ \ \subseteq\ \bigcup \{\{m \in I: j \mid m-z\}: z \in \{0,r\}\}
	%\{m \in I: \{p \in P(n): p \mid m(m-r)\}= \{j\}\}
%. \label{-MUP-}
%\end{align}
%Then  \eqref{-XXQ-} follows through the fact that \eqref{-TJI-} satisfies all stated conditions on $u_{I,n}$. Here, the coefficient 'two' preceding $u_{I,n}$ on the right side of \eqref{-XXQ-}, is attributable to the fact that $|\{\{m \in I: j \mid m-z\}: z \in \{0,r\}\}|=2$ while $ \bigcap \{\{m \in I: j \mid m-z\}: z \in \{0,r\}\} = \emptyset$. 
%\vspace{\baselineskip}
%

%\Case B: 
%For all $S \in U_{X,j,n,r}$,
% \begin{align}
%	\notag & (\bigcup (X \cup \{\{m \in I: j \mid m-z\}: z \in \{0,r\}\})) \setminus S \\
%	\notag & \ \ \ \  \ \ \ \ \  \nsubseteq\  \bigcup \{\{m \in I: j \mid m-z\}: z \in \{0,r\}\}. 
%	%\label{-MZS-}
%\end{align}

We have
 \begin{align}
	\notag & \bigcup (\bigcup (X \cup f(X,j,n,r))) \setminus \bigcup X\\
	\notag & \ \ \ \  \ \ \ \ \  =\ \{m \in I: \{p \in P(n): p \mid m(m-r)\}= \{j\}\ \&\ j \mid m-s'\}. 
	% \label{-MZS-}
\end{align}
The fact that $\{m \in I: j \mid m-s\}$ is the sole element of $X$ that is not in $ f(X,j,n,r)$, while, by assumption, for each $K \in \{X, f(X,j,n,r)\}$,  
\begin{align}
	|\bigcup K| \le\  |I| \left(1-(1-u_{I,n})\prod_{k=1}^n\left(1-\frac{1}{p_k}\right)\right)
\end{align} % !!
 thereby implies the following. The fact that 
\begin{align}
	\notag |\bigcup (X \cup f(X,j,n,r))| =&\ |\bigcup X| + |\{m \in I: j \mid m-s'\}|\\
	&\ \ \ \  \  \ \  \ - |(\bigcup X) \cap  \{m \in I: j \mid m-s'\}|
\end{align}  % !!
gives
%through the Euler totient, that
%\begin{align}
%	|\{m \in I: \{p \in P(n): p \mid m(m-r)\}= \{j\}\}|  - \frac{|I|\prod_{p \in P(n) \setminus \{j\}}\left(1-\frac{1}{p}\right)}{j} \le |I|u_{I,n}\prod_{k=1}^n\left(1-\frac{1}{p_k}\right).
%\end{align}
\eqref{-XXQ-}. Here, the coefficient two for $2u_{I,n}$ is justified through the fact that
\begin{align}
	|\{\bigcup (\bigcup (X \cup f(X,j,n,r))) \setminus \bigcup X, \bigcup X\}| = 2.
\end{align}
This completes the proof.
%
%We have II, below.
%\vspace{\baselineskip}
%
%\noindent II. For any $k \in P(n) \setminus \{j\}$ and any $z$ and $z'$ in $\{0, r\}$, the set $\{\{m \in I: k \mid m-z\}, \{m \in I: j \mid m-z'\}\}$ is a subset of some element of $R(I,n,r)$. Contrastingly, the assumption that $j \nmid r$ implies that, for all $M \in R(I,n,r)$, the set $\{\{m \in I: j \mid m-d\}:d \in \{0,r\}\}$ is not a subset of $M$. 
%\vspace{\baselineskip}
%
%
%Let $K$ be any element of $\{X, f(X,j,n,r)\}$ for which $|\bigcup K| = \max\{|\bigcup X|, |\bigcup f(X,j,n,r)|\}$. Let $y_X=s$ when $K=X$ and $y_X = s'$ when $K=f(X,j,n,r)$. Denote
% \begin{align}
%\notag 	d_{S'_j}=\ 2-|(\bigcup K) \setminus (S'_j \cup  \{m \in I: j \mid m-y_X\})|.
%\end{align}
%
%Since $\{m \in I: j \mid m-y_X\}$ is the sole element of $K$ that is not a member of the sole element of $\{X, f(X,j,n,r)\} \setminus K$, we have the following. Combining all of I, II, and the fact that the right side of \eqref{-TJI-} satisfies all stated conditions on $u_{I,n}$, gives
%\begin{align} 
% |\bigcup (X  \cup f(X,j,n,r))| \le\ |I| \left(1-(1-d_{S'_j} u_{I,n})\prod_{k=1}^n\left(1-\frac{1}{p_k}\right)\right). \label{-IDQ-}
% \end{align} % !!
%
%We have shown that \eqref{-XXQ-} is true for both Case A and Case B so the proof is complete.
\end{proof}

\begin{lemma} \label{-MAB-}
Let $n$, $V$ and $b$ all be as in Lemma \ref{-CAP-}. Let $a$ be any positive integer less than $b$. Then for any $a$-element subset $S$ of $[1, b]$,
\begin{align}
	\notag & |(\{1 \le m \le \prod_{q \in P(n) \cup \{b\}} q:  (m, \prod_{q \in V \cup \{b\}} q) \ne 1\}  \cup \{kwb : k \in \mathbb{N}\ \&\  w \in S\})\\
	\notag &\ \ \  \ \ \ \ \ \ \ \  \ \ \ \ \ \ \ \ \cap\  \{1 \le m \le \prod_{q \in P(n) \cup \{b\}} q:  (m, \prod_{q \in P(n) \setminus V} q) = 1\}|\\
	%
	% \notag &\ \ \  \ \ \ \ \ =\  
	% 
	% \prod_{q \in K}q\left(1-\prod_{w \in V \cup \{c, \prod_{q \in K}q\}}
	% \left(1-\frac{1}{w} \right)\right)
	% |\{v + m\prod_{q \in V \cup \{b\}} q  :
% 1 \le v \le \prod_{q \in V \cup \{b\}} q\ \&\   
% m \in \mathbb{N}\ \&\ \prod_{q \in K}q \mid v + m\prod_{q \in V \cup \{b\}} q\}\\
%	\notag &\ \ \  \ \ \ \ \ \ \ \  \ \ \ \ \cap\ \{1 \le m \le \prod_{q \in P(n) \cup \{b\}} q:   \ (m, \prod_{q \in P(n) \setminus V} q) \ne 1\}|\\
	\notag &\ \ \  \ \ \ \ \ =\ 
	\left(1-\left(1-\frac{a}{b}\right)\right. \\
	&\  \  \ \ \ \  \ \ \ \ \ \ \left. \times \prod_{q \in V} \left(1-\frac{1}{q}\right)\right)|\{1 \le m \le \prod_{q \in P(n) \cup \{b\}} q:    (m, \prod_{q \in P(n) \setminus V} q) = 1\}|.
	%
	% \prod_{q \in P(n) \cup \{b\}} q-  \prod_{q \in V \cup \{b\}}q \phi(\prod_{q \in P(n) \setminus V} q).
	\label{-MIB-}
\end{align}
\end{lemma}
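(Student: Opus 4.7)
The plan is to follow the Chinese Remainder Theorem template of Lemma~\ref{-CAP-}. Set $N = \prod_{q \in P(n) \cup \{b\}} q$, $M_1 = \prod_{q \in V \cup \{b\}} q$, and $M_2 = \prod_{q \in P(n) \setminus V} q$. Since $b$ has no factors in $P(n)$ and $V \cap (P(n) \setminus V) = \emptyset$, we have $N = M_1 M_2$ and $\gcd(M_1, M_2) = 1$. Denote by $A$, $B$, and $C$ the three sets appearing on the left side of \eqref{-MIB-}, so that the quantity sought is $|(A \cup B) \cap C|$.

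First I would partition $[1, N]$ into the $M_1$ arithmetic progressions $P_v = \{v + kM_1 : 0 \le k < M_2\}$ indexed by $v \in [1, M_1]$, exactly as in item~I of the proof of Lemma~\ref{-CAP-}. Both $A$ and $B$ are unions of residue classes modulo $M_1$: for $A$ by construction, and for $B = \{kwb : k \in \mathbb{N}\ \&\ w \in S\}$ because for each $w$ the multiples of $wb$ reduce modulo $M_1$ to a subgroup of $\mathbb{Z}/M_1\mathbb{Z}$. Hence $(A \cup B) \cap P_v$ is either all of $P_v$ or empty, according to $v \bmod M_1$. As in item~III of that proof, $\gcd(M_1, M_2) = 1$ together with CRT forces every $P_v$ to meet $C$ in exactly $|C|/M_1 = M_2 \prod_{q \in P(n) \setminus V}(1 - 1/q)$ elements, and therefore $|(A \cup B) \cap C| = (|C|/M_1)\, r$, where $r$ is the number of residues $v \in [1, M_1]$ for which $P_v$ meets $A \cup B$.

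The remaining task is to establish $r = M_1\bigl(1 - (1 - a/b)\prod_{q \in V}(1 - 1/q)\bigr)$. A second application of CRT splits residues modulo $M_1$ into a $\prod_{q \in V}q$-component and a $b$-component, and the conditions defining $A$ and $B$ split along these. In the $V$-component, by the argument underlying Lemma~\ref{-CAP-}, the residues coprime to $\prod_{q \in V}q$ comprise a fraction $\prod_{q \in V}(1 - 1/q)$. In the $b$-component, the union $A \cup B$ sweeps in exactly $a$ of the $b$ residue classes mod $b$, namely the single class excluded by the $b$-coprimality part of $A$ together with $a - 1$ further classes picked out by the remaining elements of $S$. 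Multiplying the complementary densities yields the density $(1 - a/b)\prod_{q \in V}(1 - 1/q)$ of residues whose progression misses $A \cup B$, so $M_1 - r$ equals $M_1$ times this density and scaling the complement by $|C|/M_1$ produces \eqref{-MIB-}.

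The main obstacle is the $b$-component count. One must read $B = \{kwb : k \in \mathbb{N}\ \&\ w \in S\}$ carefully and verify that the $a$ elements of $S$ determine exactly $a - 1$ fresh residue classes modulo $b$ beyond the one already absorbed into $A$, with no double-counting or omissions. This rests on the distinctness of $S \subseteq [1, b]$ together with the coprimality $\gcd(b, \prod_{q \in P(n)} q) = 1$, which decouples the $b$-direction from the $V$-direction via CRT. Once this density count is secured, the rest of the argument is a routine CRT plus inclusion-exclusion calculation of the same shape as the proof of Lemma~\ref{-CAP-}.
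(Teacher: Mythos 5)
Your CRT skeleton (partition $[1,N]$ with $N=\prod_{q\in P(n)\cup\{b\}}q$ into progressions modulo $M_1=\prod_{q\in V\cup\{b\}}q$, use coprimality of $M_1$ and $M_2=\prod_{q\in P(n)\setminus V}q$ to reduce everything to a density count modulo $M_1$, then split that count into a $V$-component and a $b$-component) has the same shape as the paper's own argument, which likewise just invokes Lemma \ref{-CAP-} and its progression decomposition \eqref{-QVB-}. The problem is the step you yourself flag as ``the main obstacle'': the claim that $A\cup B$ occupies exactly $a$ of the $b$ residue classes modulo $b$. You assert that the $a$ elements of $S$ contribute $a-1$ fresh classes beyond the class $0$ already absorbed into $A$, but for the set $B=\{kwb: k\in\mathbb{N}\ \&\ w\in S\}$ as written this is false: every element $kwb$ is a multiple of $b$, so $B$ reduces modulo $b$ to the single class $0$, and since $b\mid M_1$ every multiple of $b$ in $[1,N]$ already satisfies $(m,M_1)\neq 1$ and hence lies in $A$. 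Consequently $(A\cup B)\cap C=A\cap C$, and your own machinery then yields
\begin{align}
\notag |(A\cup B)\cap C| \;=\; \left(1-\left(1-\frac{1}{b}\right)\prod_{q\in V}\left(1-\frac{1}{q}\right)\right)|C|,
\end{align}
which agrees with the right side of \eqref{-MIB-} only when $a=1$. The density factor $(1-a/b)$ in the $b$-direction is simply not delivered by $B$; the $a-1$ ``fresh'' classes you need are not there.

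This is not a repairable bookkeeping slip inside your argument: it is the entire content of the lemma. For the stated identity to hold, $B$ would have to be a union of $a$ distinct residue classes modulo $b$ (for instance $\{w+kb: k\in\mathbb{N}\ \&\ w\in S\}$ with the class of $0$ among them), and one would still have to check via CRT that those classes sit generically against the $V$-component. Your proposal correctly isolates where the difficulty lives and correctly reverse-engineers what the right side of \eqref{-MIB-} demands, but it papers over the decisive verification with an appeal to ``distinctness of $S$'' that does not engage with the actual definition of $B$. For what it is worth, the paper's own proof -- ``combining Lemma \ref{-CAP-} and \eqref{-QVB-} for $v,w\in S$'' followed by a totient identity -- never establishes this count either, so the gap you left open is also a gap in the source.
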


\begin{proof}
Combining Lemma \ref{-CAP-} and \eqref{-QVB-} for $v,w \in S$ gives
\begin{align}
	\notag & |(\{i + m\prod_{q \in V \cup \{b\}} q :
1 \le i \le \prod_{q \in V \cup \{b\}} q\ \&\ (i + m\prod_{q \in V \cup \{b\}} q, \prod_{q \in V \cup \{b\}} q) \ne 1\}\\
\notag &\ \  \ \ \  \ \ \ \ \ \  \cup \{kwb : k \in \mathbb{N}\ \&\  w \in S\})\\
	\notag &\ \ \  \ \ \ \ \ \ \ \  \ \ \ \ \cap\ \{1 \le m \le \prod_{q \in P(n) \cup \{b\}} q: (m, \prod_{q \in P(n) \setminus V} q) \ne 1\}|\\
	%
	% \notag &\ \ \  \ \ \ \ \ =\  
	% 
	% \prod_{q \in K}q\left(1-\prod_{w \in V \cup \{c, \prod_{q \in K}q\}}
	% \left(1-\frac{1}{w} \right)\right)
	% |\{v + m\prod_{q \in V \cup \{b\}} q  :
% 1 \le v \le \prod_{q \in V \cup \{b\}} q\ \&\   
% m \in \mathbb{N}\ \&\ \prod_{q \in K}q \mid v + m\prod_{q \in V \cup \{b\}} q\}\\
%	\notag &\ \ \  \ \ \ \ \ \ \ \  \ \ \ \ \cap\ \{1 \le m \le \prod_{q \in P(n) \cup \{b\}} q:   \ (m, \prod_{q \in P(n) \setminus V} q) \ne 1\}|\\
	&\ \ \  \  =
	\left(1-\left(1-\frac{a}{b}\right)\prod_{q \in V} \left(1-\frac{1}{q}\right)\right)|\{1 \le m \le \prod_{q \in P(n) \cup \{b\}} q:   (m, \prod_{q \in P(n) \setminus V} q) \ne 1\}|.
	%
	% \prod_{q \in P(n) \cup \{b\}} q-  \prod_{q \in V \cup \{b\}}q \phi(\prod_{q \in P(n) \setminus V} q).
\end{align} % !!
Therefore, the fact that $\phi(b\prod_{q \in V}q) = \prod_{q \in V \cup \{b\}}q \prod_{q \in V \cup \{b\}}(1-1/q)$ implies \eqref{-MIB-}.
\end{proof}  

\begin{lemma} \label{-MMT-}
Let $n$, $I$ and $r$ be as in Lemma \ref{-XMT-}. Then 
\begin{align}
	\notag & \left|\left\{m \in I: \left(m(m-r), \prod_{k=1}^n p_k\right) = 1\right\}\right| \\
	&\ \ \  \ \ \ \ \  \  \ \ \ \  \ \ \ \ge \ |I|(1-2u_{I,n})\prod_{k=1}^n\left(1-\frac{1}{p_k}\right)\prod_{\substack{p \in P(n)\\ p \nmid r}} \left(1-\frac{1}{p-1}\right). \label{-MIM-} 
\end{align} 
\end{lemma}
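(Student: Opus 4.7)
Plan: Since the target set equals $I \setminus \bigcup(\bigcup R(I,n,r))$ (recorded at the end of Definition \ref{D.A}), the lemma is equivalent to the upper bound $|\bigcup(\bigcup R(I,n,r))| \le |I|(1 - (1-2u_{I,n}) D)$, where $D := \prod_{k=1}^n (1-1/p_k) \prod_{p \in P(n),\, p \nmid r}(1 - 1/(p-1))$ is the expected density of the target. I would combine two ingredients: an exact full-period density from Lemma \ref{-BN-}, and an interval-correction bound from Lemma \ref{-XMT-}.

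First, identify $D$ as the natural density. Applying Lemma \ref{-BN-} with $J = P(n)$ and $s = r$ (the coprimality hypothesis is automatic since distinct primes are coprime) shows that in the full period $[1, \prod_{k=1}^n p_k]$ the count of $m$ with $(m(m-r), \prod_{k=1}^n p_k) = 1$ equals $\phi(\prod_{k=1}^n p_k) \prod_{p \nmid r}(1-1/(p-1)) = \prod_{k=1}^n p_k \cdot D$. This pins down $D$ as the exact density on a full period and identifies the right-hand factor of \eqref{-MIM-}.

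Second, use Lemma \ref{-XMT-} for the interval correction. Let $X_0 := \{V_{I,p,0}: p \in P(n)\} \in R(I,n,r)$ and $P' := \{p \in P(n) \setminus \{2\}: p \nmid r\}$. Applying Lemma \ref{-XMT-} to $X_0$ and each $j \in P'$ yields $|\bigcup X_0 \cup V_{I,j,r}| \le |I|(1 - (1-2u_{I,n})\prod_{k=1}^n(1-1/p_k))$; equivalently, the set $\{m \in I: (m, \prod_{k=1}^n p_k) = 1 \text{ and } j \nmid m-r\}$ has cardinality at least $|I|(1-2u_{I,n})\prod_{k=1}^n(1-1/p_k)$. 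Within $S_1 := \{m \in I: (m, \prod_{k=1}^n p_k) = 1\}$, the Chinese Remainder Theorem guarantees that the events $\{p \mid m-r\}$ for distinct $p \in P'$ are independent with conditional probabilities $1/(p-1)$ (exactly one of the $p-1$ nonzero residues mod $p$ equals $r \bmod p$), so the density of the target within $S_1$ equals $\prod_{p \in P'}(1-1/(p-1))$. Multiplying this CRT density factor against the Lemma \ref{-XMT-}-derived bound yields \eqref{-MIM-}.

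The hard part will be confirming that the multiplicative combination of (a) the $(1-2u_{I,n})$ interval correction from Lemma \ref{-XMT-} with (b) the full-period density factors $(1-1/(p-1))$ over all $p \in P'$ is valid --- that is, that the $u_{I,n}$ coefficient stays at $2$ rather than inflating with $|P'|$. The reason this should work is that $u_{I,n}$ uniformly controls deviations for every $T \in R(I,n,r)$, and Lemma \ref{-XMT-} already absorbs the pairing of two atomic-set aggregates into the single factor $2u_{I,n}$; the remaining primes in $P'$ then enter only through the exact CRT density given by Lemma \ref{-BN-}. Making this rigorous amounts to applying the Lemma \ref{-XMT-} correction once, globally, and transferring the full-period density of Lemma \ref{-BN-} to the interval $I$ rather than iterating coordinate-by-coordinate.
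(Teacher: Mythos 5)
Your reduction of the lemma to an upper bound on $|\bigcup(\bigcup R(I,n,r))|$ is correct, and your single application of Lemma \ref{-XMT-} to extract the factor $(1-2u_{I,n})\prod_{k=1}^n(1-1/p_k)$ for one prime $j\in P'$ is in the spirit of what the paper does. The gap is in your second step: you assert that within $S_1=\{m\in I:(m,\prod_{k=1}^n p_k)=1\}$ the conditions $p\nmid m-r$ for $p\in P'$ hold with \emph{exact} density $\prod_{p\in P'}(1-1/(p-1))$ ``by the Chinese Remainder Theorem.'' That exactness holds only over a full period of length $\prod_{k=1}^n p_k$ --- this is precisely the content of Lemma \ref{-BN-} --- whereas in every application $|I|$ (of order $p_n^2$ or $z_n$) is vastly smaller than $\prod_{k=1}^n p_k$. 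On such an interval each congruence class contributes a count that deviates from its nominal density, and controlling exactly this deviation is the entire purpose of Theorem \ref{Th1} and of the quantity $u_{I,n}$; you cannot invoke the full-period density as if it transferred to $I$ for free. Lemma \ref{-XMT-}, which you cite as the uniform control, only bounds $|\bigcup(X\cup f(X,j,n,r))|$, i.e.\ a union of atomic $p$-sieves augmented by one extra sieve; it says nothing about the cardinality of an intersection of the complements of many sieves inside $S_1$. Intersecting your one-prime bounds over all $j\in P'$ honestly gives only a union-type bound of the form $|I|-\sum_j(\cdots)$, which is far weaker than the product you need, and nothing in your proposal closes that distance --- your closing paragraph names the difficulty but the justification offered (that $u_{I,n}$ ``uniformly controls deviations for every $T\in R(I,n,r)$'') does not apply to the intersections in question.

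For comparison, the paper does not claim interval-level exactness for the $(1-1/(p-1))$ factors. It routes the argument through Lemma \ref{-MAB-} and Lemma \ref{-BM-} applied to $J=P(n)\cup\{d\}$, where $d$ is an auxiliary integer coprime to $\prod_{k=1}^n p_k$ chosen so that $u_{I,n}=d'/d$: the deviation $u_{I,n}$ is packaged as an additional quasi-sieve modulus and then pushed through the full-period multiplicative identities together with the $g_{K_T}$ decomposition in \eqref{-MIU-}. Whether that machinery is itself sound is a separate question, but it is exactly the step your proposal replaces with an assertion, and the assertion as stated is false for intervals.
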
 

\begin{proof}
Through our forthcoming \eqref{-GTI-}, we shall use Lemmas \ref{-BM-}, \ref{-XMT-} and \ref{-MAB-} to show that 
\begin{align}
 	|\bigcup (\bigcup R(I,n,r))| \le\ |I|\left(1-(1-2u_{I,n})\prod_{k=1}^n\left(1-\frac{1}{p_k}\right)\prod_{\substack{p \in P(n)\\ p \mid r}}\left(1-\frac{1}{p-1}\right)\right)
%\\
%	\le\ |I|\left(1-(1-2u_{I,n})\prod_{k=1}^n\left(1-\frac{1}{p_k}\right)\prod_{\substack{p \in P(n)\\ p \mid r}} \left(1-\frac{1}{p-1}\right)
, \label{-RIN-} 
\end{align}
from which \eqref{-MIM-} immediately follows.
We begin by noting I, below. 
\vspace{\baselineskip}

\noindent I. For any $T \in R(I,n,r)$, let $K_T$ be any subset of $T$.
Then for each $M \in \{K_T, T\}$, 
\begin{align}
	\notag |\bigcup M| \le &\ |I|\left(\left(1- (1-u_{I,n})\prod_{\substack{q \in P(n)|\ \{m \in I:\\ q \mid m-s \} \in K_T \\ \text{ for some } s \in \{0,r\}}} \left(1-\frac{1}{q}\right)\right)
	\vphantom{\left(1-u_{I,n}\right)\prod_{\substack{q \in P(n)|\ \{m \in I:\\ q \mid m-s \} \in K_T \\ \text{ for some } s \in \{0,r\}}} \left(1-\frac{1}{q}\right)\left(1-\prod_{\substack{q \in P(n)|\ \{m \in I:\\ q \mid m-s \} \in T \setminus K_T \\ \text{ for some } s \in \{0,r\}}} \left(1-\frac{1}{q}\right)\right) }    \right.\\
	\notag &\ \  \ \ \ \left. + \left(1-\prod_{\substack{q \in P(n)|\ \{m \in I:\\ q \mid m-s \} \in T \setminus K_T \\ \text{ for some } s \in \{0,r\}}} \left(1-\frac{1}{q}\right)\right)\left(1-u_{I,n}\right)\prod_{\substack{q \in P(n)|\ \{m \in I:\\ q \mid m-s \} \in K_T \\ \text{ for some } s \in \{0,r\}}} \left(1-\frac{1}{q}\right)\right)\\ 
	=& \ |I|\left(1- (1-u_{I,n})\prod_{k=1}^n\left(1-\frac{1}{p_k}\right)\right). \label{-MIU-}
\end{align}
We see here that, on specifying $T$, the right side of the first relation is a constant function of $K_T$. 
\vspace{\baselineskip}

\noindent Let $d$ be any integer that has no factors in $P(n)$ and for which there exists an integer $d'$ less than $2d$ for which $u_{I,n} = d'/d$. Each term between the outer brackets on the right side of the first relation of \eqref{-MIU-} is found through Lemma \ref{-MAB-} for 
\begin{align}
	\notag V=\ \{q \in P(n): \{m \in I: q \mid m-s \} \in K_T \text{ for some } s \in \{0,r\}\}
\end{align}
and $a = d'$ and $b=d$, combined with the fact that, for each $H \in \{ V \cup \{b\}, P(n) \setminus V\}$, we have $\phi(\prod_{q \in H}q)/\prod_{q \in H}q = \prod_{q \in H}(1-1/q)$.

Denote 
\begin{align}
\notag 	g_{K_T}=\  \left(1-\prod_{\substack{q \in P(n)|\ \{m \in I:\\ q \mid m-s \} \in T \setminus K_T \\ \text{ for some } s \in \{0,r\}}} \left(1-\frac{1}{q}\right)\right)\left(1-u_{I,n}\right)\prod_{\substack{q \in P(n)|\ \{m \in I:\\ q \mid m-s \} \in K_T \\ \text{ for some } s \in \{0,r\}}} \left(1-\frac{1}{q}\right),
\end{align} 
whence $g_{K_T}$ is the second term between the outer brackets on the right side of the first relation of \eqref{-MIU-}. Then, for each $p \in P(n)$,  
\begin{align}
	\notag g_{\substack{ T \setminus \{\{m \in I: p \mid m-s\}:\\ s \in \{0,r\}\}}} =&\  \frac{1-u_{I,n}}{p}\prod_{q \in P(n) \setminus \{p\}} \left(1-\frac{1}{q}\right). \label{-GTI-}
\end{align}
Further, $g_T = 0$. 
% Also, 
%%for any $t$ for which two is the sole element of $P(n)$ that divides $t$,
%
%
%\noindent II. for \eqref{-GTI-},
%%, for each $v \in \{r,t\}$, 
%we may successively substitute each element of 
%\begin{align}
%	\{T \setminus \{\{m \in I: q \mid m-s\}: s \in \{0,r\}\}: q \in P(n)\} \label{-TMI-}
%\end{align}
% for $T \setminus \{\{m \in I: p \mid m-s\}: s \in \{0,r\}\}$. 
%%Here, the transition from using $v=r$ to using $v=t$ is found through the inequality in \eqref{-GS}, by which we have the second inequality in \eqref{-RIN-}.

 When $p \nmid r$ we have $\{m \in I: p \mid m\} \cap \{m \in I: p \mid m-r\} = \emptyset$. We may show now that combining Lemma \ref{-XMT-} for $j=p$ and I (above) gives \eqref{-RIN-}. Here, for all $M \in R(I,n,r)$, the set $H = \{\{m \in I: p \mid m-s\}: s \in \{0,r\}\}$ is not a subset of $M$, since $p \nmid r$. However, some element of $H$ is in $M$. Hence, for our above reference to Lemma \ref{-XMT-}, our assumptions on $u_{I,n}$ allow us to assume that for some $Y \in R(I,n,r)$
\begin{align}
\notag u_{I,n} \ge\ 
	\frac{\|\bigcup Y| - |I|\left(1-\prod_{k=1}^n\left(1-\frac{1}{p_k}\right)\right)}{|I|\prod_{k=1}^n\left(1-\frac{1}{p_k}\right)}
\end{align} 
and we choose $X$ and $j$ as in Lemma \ref{-XMT-} so that $|Y| = \max\{|\bigcup X|, |\bigcup f(X,p,n,r)|\}$ where $f$ is as in Lemma \ref{-XMT-}. Here we recall that $f(X,j,n,r) \in R(I,n,r)$.
Our assumptions on $u_{I,n}$ allow us, also, to assume that $u_{I,n} = 0$ when, for all $M \in R(I,n,r)$, we have $|\bigcup M| \le |I|(1-\prod_{k=1}^n(1-1/p_k))$. Thus combining Lemma \ref{-MAB-} (for $a=2d'$ and $b=d$ and $V =  P(n) \setminus \{p\}$ and subsequently $V =\{p\}$) and \eqref{-MIU-} for each $T \in \{X, f(X,p,n,r)\}$, itself combined with the fact that, as already noted, $g_T =0$, and finally with Lemma \ref{-BM-} for $J=P(n) \cup \{d\}$ (whence we find the products over $k$ in \eqref{-RIN-}), gives \eqref{-RIN-}. With respect to Lemma \ref{-BM-}, we note here that, for any real $x$, 
\begin{align}
	\notag \left(1-\frac{1}{x}\right)\left(1-\frac{1}{x-1}\right) =&\ \frac{(x-1) \frac{x-2}{x-1}}{x}\\
\notag 	=&\ 1-\frac{2}{x}.
\end{align}
We substitute first $x = d$ and subsequently, when combining \ref{-MAB-} with Lemma \ref{-BM-}, $x=d'/d$, and finally $a= 2d'$ and $b=d$ to reach \eqref{-RIN-}. 

Since 
\begin{align}
\bigcup (\bigcup R(I,n,r)) =\ \{m \in I: (m(m-r), \prod_{k=1}^n p_k) \ne 1\},
\end{align} % !!
we may replace $(m(m-r), \prod_{k=1}^n p_k) \ne 1$ with $(m(m-r), \prod_{k=1}^n p_k) = 1$ whence \eqref{-RIN-} implies \eqref{-MIM-}.
\end{proof}

\subsection{Remark}\label{-AS-} 
 Let $n>0$. Since, for all $m$ such that $0 < m < p_{n}^{2}$ and all $k \ge 1$, we have $\{p \in P(n): p \mid m\} \ne \{p_{n+k}\}$, there is no composite in $\{1 \le j \le p_n^2: (j, \prod_{k=1}^n{p_k}) = 1\}$ (hence our respecting the sieve of Eratosthenes, by employing $p_n^2/2$ in our specifying the cardinality of intervals with which we are working). Therefore, for all integers, $z_n$, such that $p_n^2/2< z_n < p_{n+1}/2$, for $s(m)=1$ and $s'(m) = m-2z_n$, each $y \in \{s, s'\}$ and each $d \in \{1,2\}$,
\begin{align}
 \notag \left\{1 \le m \le  dz_n: \left(m y(m), \prod_{k=1}^n{p_k}\right) = 1\right\}
% \label{-GOL-}
\end{align} 
is a subset of the set of all the primes less than or equal to $dz_n$.

%\subsection{Remark} \ref{-RTP-}
%Let $n$ be any integer. The use of our forthcoming $h_n$, which may be written as the highest $x$ such that $\log(x)(1-1/x)= p_n$,  can be supplanted by slightly augmenting our method to use, instead of Lemma \ref{-AWN-}, the Mertens theorem given by
%\begin{align}
%\notag 	\lim_{n \to \infty}  \log(p_n) \prod_{k=1}^n\left(1-\frac{1}{p_k}\right) =\ e^{-\gamma},
%\end{align}
%where $e$ is thee Euler number and $\gamma$ is the Euler-Mascheroni constant.

\subsection{Definition.} \label{-WPV-} 

For any $x>1$, let 
\begin{align}
\notag \text{Hi}(x)=\ \frac{x}{\log{x}} \left(1 + \frac{1}{\log{x}}+ \frac{2.51}{\log^{2}{x}}\right).	
\end{align}
It is a result of Dusart \cite{1} that $\pi(x) < \text{Hi}(x)$ \label{-D2}
%\begin{equation}
%\pi(x) >  \frac{x}{\ln{x}} \left(1 + \frac{1}{\log(x)}\right) 
%\label{-D1}
%\end{equation}
%for all $x \ge 599$ and
% say how it's decreasing rel to x/ln{x}
for all $x \geq 355,991$. We find
%find $p_{109}=599$ and 
$\pi(355991) = 30,456$.
% while $599^2 = 35,8801$. 
Also, for $x \ge 355,991$, $\text{Hi}(x)\log(x)/x$ is strictly decreasing to one.

\subsection{Definitions.} \label{D.QK}
For any real $r$, let $q_r$ be the highest $x$ such that $\text{Hi}(x) = \text{Hi}(r) + 1$.
% USED TO BE q_k = Hi(355991)+k
% Here we use $355991$ in accordance with values given for the Dusart upper bound on the prime count.

We have
\begin{align}
	\notag \text{Hi}(355991) =&\  \frac{355991}{\log(355991)}\left(1+\frac{1}{\ln(355991)}+\frac{2.51}{\log^2(355991)}\right) \\
	\approx &\ 30456.026
	%   30456.026028
\end{align} % !!
and 
\begin{align}
%\notag & q_{355991} = 355991.00  \\
\notag  	q_{355991} \approx&\ 356003.80\\
	% q_1 \approx 356003.79885
 		q_{356003.80} \approx&\ 356016.58. \label{-QD-}
	% q_2 = 356016.577
\end{align} 
It follows through the fact that $\log{r}$ is increasing that $q_r-q_{r-1}$ is an increasing function of $r$

For any positive integer $k$, let $v(k)$ be the real number such that  $(v(1), v(2), v(3), \ldots )$ is the sequence of real numbers for which  I to III, below, are all true:
\vspace{\baselineskip}

\noindent I.  $\{j : j = v(u) \text{ for some } 1 \le u \le 30456\} = P(30456)$;
\vspace{\baselineskip}

\noindent II. for all $c \ge 30458$ we have $v(c) = q_{v(c-1)}$;
\vspace{\baselineskip}

\noindent III. for $t$ such that $\text{Hi}(t) = 30456$, we have $v(30457) = q_{t}$, whence $t \approx 355990.667$. 

\noindent Then through the previously cited result of Dusart, $v(c) < p_c$. Also, $v(c+1)-v(c)$ is strictly increasing. 
We note that $p_{30456} = 355969 < q_{p_{30455}} \approx 355979.783 $. Contrastingly $p_{30457} = 356023 > 356003.456 \approx q_{355990.667} \approx v(30457)$. 
%\subsection{Remark} \label{-RNI-}
%For any integer $c$ let $u_c$ be the highest $x$ such that $x/\log{x} = c$. Then $(v(c)-v(c-1))/u_c????$ is strictly increasing to one. 

\subsection{Remark} \label{-RBU-}
Let $n$ be any integer. In the ensuing lemma, the use of $2((n+1)^2-n^2)$ as a denominator is designed to invoke, in a more congenial expression, the $5n^2/8$
%| and, more so, $n+1+5(n+1)^2/8 - n -5(n-1)^2/8 = 1 + 5((n+1)^2 - 5n^2)/8$ 
that is found through Theorem \ref{Th1} for $I=[1+i, \lfloor p_n^2 \rfloor + i]$ for some integer $i$ and $J=P(n)$. 
%Specifically, $2(n+1+(n+1)^2/2) - 2(n+n^2) = 2((n+1)^2-n^2 +1)$. 

\begin{lemma} \label{-OLQ-}
Let $n \ge 30457$. 
%Let $w_n$ be any integer such that $v(n)^2/2 < w_n < v(n+1)^2/2$. 
Then
\begin{align}
 	\notag \frac{\frac{v(n+1)^2}{\log(v(n+1)^2)}-\frac{v(n)^2}{\log{v(n)^2}}}{2((n+1)^2-n^2)}   =\ \log{v(n)} + O\left(\frac{\log^2{v(n)}}{v(n)}\right).
	\end{align} % !!
\end{lemma}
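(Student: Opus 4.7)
The plan is to obtain this asymptotic by combining three ingredients: a Taylor expansion of the numerator applied to $F(x) := x^2/\log(x^2) = x^2/(2\log x)$; an estimate of the step size $\delta_n := v(n+1) - v(n)$ extracted from the defining property of $v$; and the algebraic bridge between $v(n)$ and $n$ that is forced by the construction.

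First I would establish the identity $\text{Hi}(v(n)) = n$ for all $n \ge 30457$. This follows by induction from Definition \ref{D.QK}: clause III yields $\text{Hi}(v(30457)) = \text{Hi}(q_t) = \text{Hi}(t) + 1 = 30457$, and clause II combined with the defining property $\text{Hi}(q_r) = \text{Hi}(r) + 1$ supplies the inductive step. Inverting $\text{Hi}(v(n)) = n$ by solving $(v(n)/\log v(n))(1 + 1/\log v(n) + 2.51/\log^2 v(n)) = n$ for $v(n)$ then gives the quantitative bridge $v(n)/n = \log v(n) - 1 + O(1/\log v(n))$ between $n$ and $v(n)$.

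Next I would estimate $\delta_n$. Direct differentiation yields $\text{Hi}'(x) = 1/\log x + 0.51/\log^3 x - 7.53/\log^4 x$, so $\text{Hi}(v(n+1)) - \text{Hi}(v(n)) = 1$ combined with the mean value theorem gives $\delta_n = 1/\text{Hi}'(\xi)$ for some $\xi \in (v(n), v(n+1))$. A first pass shows $\delta_n = O(\log v(n))$; reiterating with $\log \xi = \log v(n) + O(\log v(n)/v(n))$ refines this to $\delta_n = \log v(n) + O(1/\log v(n))$.

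Finally I would expand the numerator. Computing $F'(x) = x/\log x - x/(2\log^2 x)$ and $F''(x) = O(1/\log x)$, Taylor's theorem gives $F(v(n+1)) - F(v(n)) = F'(v(n))\delta_n + \tfrac{1}{2}F''(\xi')\delta_n^2$ for some $\xi' \in (v(n), v(n+1))$. Substituting the estimates of $\delta_n$ and $v(n)/n$ and dividing by $2((n+1)^2 - n^2) = 4n+2$ then produces the stated asymptotic. The main obstacle I expect is the careful bookkeeping required to reach the error term $O(\log^2 v(n)/v(n))$ rather than a weaker bound: subleading corrections from the inversion of $\text{Hi}$, from the expansion of $\delta_n$, and from the Taylor remainder of $F$ must all be tracked simultaneously, and one has to verify that they combine without any accidental cancellation loss.
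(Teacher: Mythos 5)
Your plan---invert $\text{Hi}(v(n))=n$, extract $\delta_n=v(n+1)-v(n)$ from $\text{Hi}(v(n+1))-\text{Hi}(v(n))=1$ via the mean value theorem, and Taylor-expand $F(x)=x^2/(2\log x)$---is the correct rigorous way to attack this quantity, and your intermediate estimates ($n=\frac{v(n)}{\log v(n)}(1+O(1/\log v(n)))$, $\delta_n=\log v(n)+O(1/\log v(n))$, $F'(x)\sim x/\log x$) are right. The gap is that your final sentence, that dividing by $4n+2$ ``produces the stated asymptotic,'' is asserted rather than computed, and if you actually perform the division it does not. Your own estimates give numerator $F(v(n+1))-F(v(n))=F'(v(n))\delta_n+O(\log v(n))\sim \frac{v(n)}{\log v(n)}\cdot\log v(n)=v(n)$ and denominator $4n+2\sim 4v(n)/\log v(n)$, so the quotient is $\sim\tfrac{1}{4}\log v(n)$, off from the claimed main term by a factor of $4$. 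Moreover the claimed error $O(\log^2 v(n)/v(n))$ is unattainable along your route (or any route): the $-x/(2\log^2 x)$ term in $F'$, the $1/\log v(n)$ correction in the inversion of $\text{Hi}$, and the Taylor remainder $\tfrac12 F''(\xi')\delta_n^2\sim\tfrac12\log v(n)$ each perturb the quotient by an amount of order $1$, so the best true statement of this shape is $\tfrac14\log v(n)+O(1)$ (a more careful computation gives $\tfrac14\log v(n)-\tfrac38+O(1/\log v(n))$; numerically at $n=30457$ the left side is about $2.9$ while $\log v(n)\approx 12.8$).

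For comparison, the paper's own proof arrives at the displayed formula only by writing $4n+2=\frac{v(n)}{\log v(n)}+O\bigl(\frac{v(n)}{\log v(n)}\bigr)$---a containment whose $O$-term is three times the retained main term---and then dividing through as if that $O$-term were negligible, compounded by two separate factor-of-$2$ slips when cancelling $\log(v(n)^2)=2\log v(n)$ against $\log v(n)$. Your method, carried out honestly, would expose these slips rather than reproduce the lemma. So the missing step is not mere bookkeeping: the statement as written appears to be false, and no tightening of your error analysis will recover the main term $\log v(n)$ with error $O(\log^2 v(n)/v(n))$.
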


\begin{proof} 
We have
\begin{align}
 %	\notag \frac{\frac{p_{n+1}^2}{\log(p_{n+1}^2)}-\frac{p_{n}^2}{\log(p_{n}^2)}}{2(n+1)^2 -n^2 + 1}   \\
	%\notag \\
		\notag  \frac{\frac{v(n+1)^2}{\log(v(n+1)^2)}-\frac{v(n)^2}{\log{v(n)^2}}}{2((n+1)^2-n^2)} 
	        =&\ \frac{\frac{v(n+1)^2}{\log(v(n+1)^2)}-\frac{v(n)^2}{\log{v(n)^2}}}{4n + 2}\\ 
	\notag \\
\notag  	=&\ \frac{\frac{v(n+1)^2}{\log(v(n+1)^2)}-\frac{v(n)^2}{\log{v(n)^2}}}{\frac{v(n)}{\log{v(n)}}+ O\left(\frac{v(n)}{\log{v(n)}}\right)}\\
\notag \\
\notag  	=&\ \frac{v(n+1)^2 -v(n)^2}{v(n) + O\left(\frac{v(n)}{\log{v(n)}}\right)}
\\
\notag \\
\notag  	=&\ \frac{2v(n)\log(v(n)) + \log^2{v(n)}}{v(n) + O\left(\frac{v(n)}{\log{v(n)}}\right)}\\
\notag \\ 
\notag  	= &\ \log{v(n)}  + \frac{\log^2{v(n)}}{v(n)} + O\left(\frac{\log^2{v(n)}}{v(n)}\right)\\
\notag \\ 
\notag  	= &\ \log{v(n)} + O\left(\frac{\log^2{v(n)^2}}{v(n)}\right). 
\end{align}
The second relation follows through the Prime Number theorem, whereby for real $x$, $\pi(x) \sim x/\log{x}$. In the fourth relation, the fact that $\text{Hi}(x)\log(x)/x$ is decreasing to one implies that $v(n+1) - v(n) \sim \log{v(n)}$. More precisely, the fact that $\text{Hi}(x)\log(x)/x$ is strictly decreasing to one implies that $(m_x-x)/(q_x-x)$ is strictly decreasing to one, where $m_x$ is the highest $j$ such that $j/\log{j} = 1+x/\log{x}$. 
% ** MAYBE EXPLAIN? use the incremental increase in log value in the x-stretch.
Thus we substitute
\begin{align}
	(v(n)+\log{v(n)})^2-v(n)^2 = 2v(n)\log(v(n)) + \log^2{v(n)}
\end{align} % !!
 for $v(n+1)^2 - v(n)^2$. This completes the proof.
\end{proof}

\subsection{Proof of Theorem \ref{Th2}}
\begin{proof}
In our proof we tacitly use $i$ and $r$ as in our Introduction to Theorems \ref{Th2} and \ref{Th3}, with $i=\pi(\sqrt{2i})$ and $r=2$. More precisely, for any integer $n$, we use Lemma \ref{-MMT-} in conjunction with the fact that, for any $m \le p_n^2$, when $m$ and $m-2$, and thereby $m(m-2)$, are each coprime to $\prod_{k=1}^n p_k$, we see that $m$ and $m-2$ are together a prime pair.

The fact that, for any $m$, $(m(m-2), \prod_{k=1}^n p_k) =1$ if and only if both $(m, \prod_{k=1}^n p_k) =1$ and $(m-2, \prod_{k=1}^n p_k) =1$, implies through Remark \ref{-AS-} first that
\begin{align}
\notag  & \left\{(m, m-2): 1< m < p_n^2\ \&\ \left(m(m-2), \prod_{k=1}^{n} p_k \right) = 1 \right\}\\
 & \ \ \ \   =\ \left\{(p, p-2) : p_n +2 < p < p_n^2\ \&\ p \text{ prime } \ \&\ p-2 \text{ prime}\right\}
\end{align}
and, thereby, second that
\begin{align}
\notag & \left|\left\{1 < m \le p_n^2: \left(m(m-2), \prod_{k=2}^{n} p_k \right) = 1 \right\}\right|\\
 & \ \ \ \   =\ \left|\left\{(p, p-2) : p_n +2 < p \le p_n^2\ \&\ p \text{ prime } \ \&\ p-2 \text{ prime}\right\}\right|.	\label{-GYG-}
\end{align} 

%We have
% \begin{align}
% \lim_{n \to \infty} \frac{1 -\frac{\log{p_n^2}}{\log{p_{n+1}^2}} - \frac{1}{p_{n+1}}}{1-\frac{1}{p_{n+1}}} =\ 0, \label{-NMN-}
% \end{align}
%the quotient on the left side being the final line of \eqref{-WNT-} for $m_n = p_n^2/2$. 

The Mertens theorem \cite{3} is given by 
\begin{align}
	\notag \lim_{n \to \infty} \log(p_n) \prod_{k=1}^n\left(1-\frac{1}{p_k}\right) =\ e^{-\gamma},
\end{align}
where $e$ is the Euler number and $\gamma$ is the Euler-Mascheroni constant. Since $\log{p_n^2} = 2 \log{p_n}$, it follows through the Prime Number theorem that
\begin{align}
	\notag \lim_{n \to \infty} \left(\sum_{k=1}^{\pi(p_n^2)} \frac{(\log(p_k)-\log{p_{k-1}})}{\pi(p_n^2)}\right) \prod_{k=1}^n\left(1-\frac{1}{p_k}\right) =&\ 2 e^{-\gamma}\\
	\approx &\ 1.12292,
\end{align}
approximating upwards, so we may impose the assumption on $u_{[1, p_n^2],n}$  that $u_{[1, p_n^2],n} \sim 1-1/1.12292 < 1/2$. Combining Lemma \ref{-OLQ-}, noting that $v(n) < p_n$ for $n \ge 30457$, and the Mertens theorem, all combined in turn with Lemma \ref{-MMT-} for $I=[1+i, p_n^2+i]$, where $i$ is any integer, themselves combined with \eqref{-IJN-} for $j=p_n^2$, $c=2$ and $n$ as current, thereby gives
\begin{align}
|\{(p, p-2) : p_n +2 < p \le p_n^2\ \&\ p \text{ prime } \ \&\ p-2 \text{ prime}\}| \to \infty. \label{-PPN-}
\end{align}

Since the set whose cardinality is on the left side of \eqref{-PPN-} is a subset of all pairs, $(p, p-2)$, such that $p$ is prime and less than $p_n^2$ and for which $p-2$ is also prime, through Remark \ref{-AS-} for $d=2$, the proof is complete.
\end{proof}

%\subsection{Remark} \ref{-RGC-}
%In the ensuing proof

 \begin{theorem}\label{Th3} 
%Let $n >6$. For all even $i$ greater than two and any
%real $s_n$ for which
%\begin{align}
%	\notag s_n \ge\ -\frac{1}{\log{v(n)^2/2}} + \prod_{k=1}^{n}\left(1-\frac{1}{v(k)}\right)
%\end{align}
%we have 
%\begin{align}
% there exist primes $p$ and $q$ such that $p+q=i$.

Let $n>4$. Then for any integer $z_n$ for which $p_n^2/2 < z_n < p_{n+1}^2/2$ and any $s_n$ for which
\begin{align}
	\notag s_n \ge\ -\frac{\pi(z_n)}{z_n} + \prod_{k=1}^{n}\left(1-\frac{1}{p_k}\right)
\end{align}
and
\begin{align}
	z_n-\left(1-\frac{2\left(z_n s_n + n +\frac{5 n^2}{8}\right)}{z_n\prod_{k=1}^n \left(1-\frac{1}{p_k}\right)}\right)i\prod_{k=1}^n\left(1-\frac{1}{p_k}\right)\prod_{k=1}^n\left(1-\frac{1}{p_k-1}\right) \ge\ 1, \label{-SNN-}
\end{align}
there exist primes $p$ and $q$ that satisfy the Goldbach equation $p+q = 2z_n$.
\end{theorem}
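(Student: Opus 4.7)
The plan is to apply Lemma \ref{-MMT-} with $I = [1, z_n]$ and $r = 2z_n$, interpreting the resulting coprimality count as Goldbach pairs via the folded number scale of \ref{-MO-} and Remark \ref{-AS-}. For any $m \in I$ with $m > 1$, the assumption $p_n^2/2 < z_n < p_{n+1}^2/2$ gives $m \leq z_n < p_{n+1}^2$, so by Remark \ref{-AS-} the condition $(m, \prod_{k=1}^n p_k) = 1$ forces $m$ to be prime. Symmetrically, $2z_n - m \in [z_n, 2z_n - 1]$ satisfies $2z_n - m < p_{n+1}^2$, so $(m - 2z_n, \prod p_k) = (2z_n - m, \prod p_k) = 1$ forces $2z_n - m$ to be prime (with $m > 1$ ruling out $2z_n - m = 1$). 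Consequently any $m > 1$ in the set $S := \{m \in I : (m(m - 2z_n), \prod_{k=1}^n p_k) = 1\}$ yields a Goldbach representation $p + q = 2z_n$.

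The next step is to invoke Lemma \ref{-MMT-}, whose hypotheses are met because $r = 2z_n$ is even and $R(I, n, r)$ exists for $I = [1, z_n]$ with $|I| \geq 2 p_n$ (automatic since $z_n > p_n^2/2$ and $n > 4$). This gives
\begin{align}
\notag |S| \geq z_n (1 - 2 u_{I,n}) \prod_{k=1}^n \left(1 - \frac{1}{p_k}\right) \prod_{\substack{p \in P(n) \\ p \nmid 2z_n}} \left(1 - \frac{1}{p-1}\right),
\end{align}
where $u_{I,n}$ is any admissible value from Definition \ref{D.U.I}. The prime $p = 2$ is automatically excluded from the inner product since $2 \mid 2z_n$, consistent with Remark \ref{-r-}; any odd prime in $P(n)$ dividing $z_n$ is likewise excluded.

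The third step is to bound $u_{I,n}$ using the hypothesis on $s_n$. Because $z_n < p_{n+1}^2$, Remark \ref{-AS-} gives $|\{1 \leq m \leq z_n : (m, \prod p_k) = 1\}| = \pi(z_n) - n + 1$. Combining this count with Theorem \ref{Th1} applied to $I = [1,z_n]$ versus $[0, z_n-1]$, together with Lemma \ref{-RIJ-}, yields $|\bigcup T| \leq z_n - \pi(z_n) + n + 5n^2/8$ for each $T \in R(I, n, 2z_n)$. Thus
\begin{align}
\notag u_{I,n} = \frac{z_n\bigl(-\pi(z_n)/z_n + \prod_{k=1}^n(1 - 1/p_k)\bigr) + n + 5n^2/8}{z_n \prod_{k=1}^n(1 - 1/p_k)} \leq \frac{z_n s_n + n + 5n^2/8}{z_n \prod_{k=1}^n(1 - 1/p_k)}
\end{align}
is admissible in \ref{D.U.I}, where the inequality uses the hypothesis $s_n \geq -\pi(z_n)/z_n + \prod_{k=1}^n(1 - 1/p_k)$.

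Substituting this value of $u_{I,n}$ into the lower bound on $|S|$, the hypothesis (\ref{-SNN-}) rearranges to $|S| \geq 1$, producing at least one admissible $m$ and hence the desired Goldbach pair. The main obstacle is reconciling the product $\prod_{k=1}^n(1 - 1/(p_k - 1))$ appearing in (\ref{-SNN-}) with the product $\prod_{p \in P(n), p \nmid 2z_n}(1 - 1/(p - 1))$ delivered by Lemma \ref{-MMT-}: the former literally contains the factor $1 - 1/(p_1 - 1) = 0$ from $p_1 = 2$, so in light of Remark \ref{-r-} it must be read as ranging over $p \in P(n)$ with $p \nmid 2z_n$. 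A secondary subtlety is verifying that the slack in the numerator constant (the use of $n + 5n^2/8$ rather than $n - 1 + 5n^2/8$ in (\ref{-SNN-})) absorbs the contribution of the integer $1$ being coprime to $\prod p_k$; since the hypothesis is only strengthened by this slack, the conclusion is unaffected.
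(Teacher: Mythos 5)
Your proposal follows essentially the same route as the paper's own proof: the sieve-of-Eratosthenes identification (via Remark \ref{-AS-}) of $m$ with $(m(2z_n-m),\prod_{k=1}^n p_k)=1$ as yielding Goldbach pairs, Lemma \ref{-MMT-} with $r=2z_n$ for the lower bound, the substitution $u_{I,n}=(z_n s_n + n + 5n^2/8)/(z_n\prod_{k=1}^n(1-1/p_k))$ justified by the hypothesis on $s_n$ together with Theorem \ref{Th1} and Lemma \ref{-RIJ-}, and hypothesis \eqref{-SNN-} to force the count to be at least one. Your reading of the product in \eqref{-SNN-} as excluding $p_1=2$ (since the literal factor $1-1/(p_1-1)$ vanishes) agrees with the paper's own display \eqref{-ZNZ-}, whose corresponding product runs from $k=2$, and your remarks on the $m=1$ edge case and the undefined $i$ flag genuine loose ends that the paper's proof passes over silently.
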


\begin{proof}
We introduce our proof with I, below.
\vspace{\baselineskip}

\noindent I. Suppose that $w$ is an integer greater than $p_{30456}^2/2$. Then the number of ways of writing $w$ as the arithmetic mean of two primes is greater than or equal to the cardinality of
\begin{align}
\notag L = \left\{1 \le m \le w: 
\left(m(2w-m), \prod_{k=1}^{\pi(\sqrt{2w})} p_k\right) = 1 \right\}. 
\end{align} % !!  
Here, for any two positive integers $p$ and $q$ for which $p<q$ and $w$ is the arithmetic mean of $p$ and $q$, and $pq$ is coprime to $\prod_{k=1}^{\pi(\sqrt{2w})}{p_k}$, using the sieve of Eratosthenes we see that $p$ and $q$ are both prime; also, $p$ is in $L$ with $2w-p=q$; and for any two primes, $a$ and $b$, the average of which is $w$, $a+ b =2w$ satisfies the Goldbach equation. We note here that $p(2w-p)$ is coprime to $\prod_{k=1}^n p_k$ if and only if both $p$ and $2w-p$ are coprime to $\prod_{k=1}^n p_k$ and we substitute $p=m$ and $q=2w-p$ where $m$ is the bound variable used for $L$.
\vspace{\baselineskip}

Substituting $w=z_n$, Theorem \ref{Th3} follows by I combined with the fact that, for any even $r$,  
\begin{align}
	\notag & \left|\left\{1 \le m \le z_n : \left(m(m-r), \prod_{k=1}^n p_k\right) = 1 \right\}\right|\\
 &\ \  \ \ \ \ \ \ \ \  \ \ \  \  \ge\  z_n-\frac{2\left(z_n s_n +n +\frac{5 n^2}{8}\right)}{\prod_{k=1}^n \left(1-\frac{1}{p_k}\right)} \prod_{k=1}^n \left(1-\frac{1}{p_k}\right)\prod_{k=2}^n \left(1-\frac{1}{p_k-1}\right). \label{-ZNZ-}
\end{align}
The right side of \eqref{-ZNZ-} is found by Lemma \ref{-RIJ-} for $j=z_n$ combined with Lemma \ref{-MMT-} for $I=[1+c, z_n+c]$ where $c$ is any integer, by which we have \eqref{-IJN-}, and we substitute 
\begin{align}
 	u_{I,n} =\ \frac{z_n s_n + n +\frac{5 n^2}{8}}{z_n\prod_{k=1}^n \left(1-\frac{1}{p_n}\right)}.
\end{align} % !!
Here, the $5n^2/8$ term on the numerator is found through Theorem \ref{Th1} for $I$ as current and $J=P(n)$. The $n$ term is found by the fact that the first $n$ primes are not coprime to $\prod_{k=1}^n p_k$ combined with the fact that $\pi(v(n)^2/2) \le v(n)^2/(2\log{v(n)^2/2}) < z_n/\log{v(n)^2}$. The products over $k$ in \eqref{-ZNZ-} are found through Lemma \ref{-BM-}, specifically for the inequality in \eqref{-GS}, combined with Lemma \ref{-MMT-} for $r=2z_n$, wherein we may take it that two is the sole element of $P(n)$ that divides $r$. We now have II, below.
\vspace{\baselineskip}

\noindent II. The sieve of Eratosthenes justifies, through Remark \ref{-AS-} for $d=1$, our assumption in I (above) that the bound variable $m$ appearing on the left side of \eqref{-ZNZ-} may be taken to be equal to $p$, with $q=2w-p$ and $r=2w$. Thus
\begin{align}
\notag  & \left\{(m, 2z_n-m): 1 < m < z_n^2\ \&\ \left(m(2z_n-m), \prod_{k=1}^{n} p_k \right) = 1 \right\}\\
 & \ \ \ \   =\ \left\{(p, q) : p_n < p \le z_n \le q < 2z_n \ \&\ p \text{ prime } \ \&\ q \text{ prime }\ \&\ p+q=2z_n\right\}. \label{-MMR-}
\end{align}
Also
\begin{align}
\notag & \left|\left\{1 \le m \le z_n: \left(m(m-2z_n), \prod_{k=2}^{n} p_k \right) = 1 \right\}\right|\\
 & \ \ \ \ \ \  \ \ \   =\ \left| \left\{(p, q) : p_n < p \le z_n < q < 2z_n \ \&\ p \text{ prime } \ \&\ q \text{ prime } \ \&\ p+q=2z_n \right\}\right|.	\label{-GYJ-}
\end{align} 
In \eqref{-MMR-}, for the bound variable $m$, we have changed $(m(m-2z_n), \prod_{k=1}^n p_k) =1$, as in the preceding exposition for $r=2z_n$, to $(m(2z_j-m), \prod_{k=1}^j p_k) =1$. The set whose cardinality is the right side of \eqref{-GYJ-} is a subset of the set of all pairs, $(p, q)$, of primes such that $p_n < p \le z_n$ for which $z_j = (p+q)/2$. 
\vspace{\baselineskip}

Combining \eqref{-ZNZ-} and \eqref{-GYJ-} gives Theorem \ref{Th3}.
\end{proof} 

  \begin{theorem}\label{Th4} 
If the Riemann hypothesis is true, the Goldbach conjecture is true.
\end{theorem}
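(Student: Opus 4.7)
The strategy is to deduce Theorem~\ref{Th4} from Theorem~\ref{Th3} by verifying, under the Riemann hypothesis, that the two hypotheses of Theorem~\ref{Th3} on $s_n$ are simultaneously satisfiable for every sufficiently large $z_n$ in the window $p_n^{2}/2 < z_n < p_{n+1}^{2}/2$. The finitely many remaining small even integers will then be handled by the known numerical verification of the Goldbach conjecture to $4\times 10^{18}$.

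The first step is to import two RH-consequences. The RH-equivalent prime-counting bound $\pi(x) = \operatorname{Li}(x) + O(\sqrt{x}\log x)$ gives, for $z_n\asymp p_n^{2}/2$, that $\pi(z_n)/z_n = 1/(2\log p_n) + o(1/\log p_n)$. A theorem of Nicolas, also equivalent to RH, pins the Mertens product tightly:
\[
\prod_{k=1}^{n}\!\left(1 - \frac{1}{p_k}\right) \;=\; \frac{e^{-\gamma}}{\log p_n}\bigl(1+o(1)\bigr).
\]
With these in hand I would set $s_n$ equal to the lower bound forced by the first hypothesis of Theorem~\ref{Th3}, giving
\[
s_n \;=\; -\frac{\pi(z_n)}{z_n} + \prod_{k=1}^{n}\!\left(1-\frac{1}{p_k}\right) \;=\; \frac{e^{-\gamma}-\tfrac{1}{2}}{\log p_n}\bigl(1+o(1)\bigr),
\]
which is positive for large $n$ since $e^{-\gamma}\approx 0.5614$ and decays at rate $1/\log p_n$.

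The second step is to verify \eqref{-SNN-} asymptotically. Using the identity $(1-1/p)(1-1/(p-1)) = 1 - 2/p$ the combined product $\prod_{k=1}^{n}(1-1/p_k)\prod_{k=2}^{n}(1-1/(p_k-1))$ equals $\tfrac{1}{2}\prod_{k=2}^{n}(1-2/p_k)$. Standard Mertens-type estimates yield $\prod_{k=2}^{n}(1-1/(p_k-1)) \sim C/\log p_n$ for an explicit positive constant $C$, and combining factors gives $\tfrac{1}{2}\prod_{k=2}^{n}(1-2/p_k) \sim C e^{-\gamma}/\log^{2}p_n$. Substituting into \eqref{-SNN-} and dividing through by $z_n/\log^{2}p_n$, and absorbing the $n + 5n^{2}/8$ contribution (only of order $p_n^{2}/\log^{3}p_n$ by PNT), the inequality collapses to the fixed numerical comparison
\[
e^{-\gamma} \;>\; 2\bigl(e^{-\gamma} - \tfrac{1}{2}\bigr) \;=\; 2e^{-\gamma} - 1,
\]
which is just $1 > e^{-\gamma}$, valid since $\gamma>0$. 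Hence \eqref{-SNN-} holds for every $n$ beyond an explicit threshold $N_0$.

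Since the windows $(p_n^{2}/2, p_{n+1}^{2}/2)$ tile all sufficiently large integers, Theorem~\ref{Th3} then furnishes a Goldbach representation of $2z_n$ for every $z_n \geq p_{N_0}^{2}/2$, and even integers below the corresponding bound are dispatched by the existing numerical verification of Goldbach. The main obstacle I anticipate is quantitative control in the second step: the effective error term in the Nicolas--RH Mertens bound and the $O(\sqrt{x}\log x)$ remainder in the RH form of PNT must each decay strictly faster than the $1/\log p_n$ scale at which $s_n$ sits, so that the $o(1)$ corrections do not erode the margin in $e^{-\gamma} > 2e^{-\gamma} - 1$; pinning down an explicit $N_0$ small enough to reach the known $4\times 10^{18}$ numerical threshold is precisely where the full strength of Nicolas's RH-conditional estimate is genuinely required.
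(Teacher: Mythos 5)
Your overall architecture coincides with the paper's: both arguments feed the Riemann hypothesis through Nicolas's criterion into the hypotheses of Theorem \ref{Th3} and finish by invoking a numerical verification of Goldbach for the finitely many remaining even numbers, and your reduction of \eqref{-SNN-} to the margin $e^{\gamma}>1$ (equivalently $u_{I,n}<\tfrac12$) is exactly the inequality the paper exploits. The genuine gap is effectivity. The consequence of Nicolas you actually write down, $\prod_{k\le n}(1-1/p_k)=e^{-\gamma}(1+o(1))/\log p_n$, is the unconditional Mertens theorem; in that form RH is doing no work in your argument, and an ineffective $(1+o(1))$ cannot produce the explicit threshold $N_0$ that must be brought below the range of the numerical verification. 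What the argument needs from RH is the uniform one-sided inequality $\prod_{p\le p_n}(1-1/p)<e^{-\gamma}/\log\theta(p_n)$ valid for \emph{every} $n\ge 2$ (the paper's \eqref{-LJL-}), combined with the unconditional effective bound $\pi(x)>x/\log x$ for $x\ge 17$ and an explicit control of $\log p_n/\log\theta(p_n)$ (the paper's Lemma \ref{-PMT-} and the constant $t_x=1.007662$), so that an admissible $s_n$ with $s_n<\tfrac12\prod_{k\le n}(1-1/p_k)$ is exhibited at every $n$ beyond a computable index ($n\ge 30457$ in the paper). Note also that RH is needed only for an upper bound on the Mertens product relative to $\pi(z_n)/z_n$; your appeal to $\pi(x)=\operatorname{Li}(x)+O(\sqrt{x}\log x)$ is superfluous and, with an unspecified implied constant, again ineffective as stated.

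You flag this difficulty yourself in your closing paragraph, but flagging it is not closing it: as written, your proof yields only that all sufficiently large even numbers are sums of two primes under RH, with no bound on ``sufficiently large'', so the handoff to the $4\times 10^{14}$ (Richstein) or $4\times 10^{18}$ computations cannot be made. The paper's proof of Theorem \ref{Th4} is, in essence, your plan with this quantitative step carried out explicitly via the auxiliary sequence $v(n)$, Lemma \ref{-OLQ-} and Lemma \ref{-PMT-}; whether the paper's own numerics in that step are sound is a separate question, but that explicit computation is the entire content of the step you leave open, and without it the proposal does not constitute a proof.
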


\subsection{Definition.} \label{D.TH}
For any $x$, let $\theta(x) = \sum_{j=1}^{\pi(x)} \log{p_j}$. 

\begin{lemma} \label{-PMT-}
For all $s>30457$,
\begin{align}
	\frac{\log{p_s}}{\log{\theta{p_m}}} <\ 1.007662 \label{-TWL-}
\end{align}
\end{lemma}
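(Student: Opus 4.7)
The plan is to reduce the stated inequality to an explicit lower bound on the Chebyshev theta function and then invoke a Dusart-type estimate (the paper already cites \cite{1}). Interpreting the denominator as $\log \theta(p_s)$ (the subscript $m$ in the displayed expression is evidently a misprint for $s$), the claim $\log p_s/\log \theta(p_s) < 1.007662$ is equivalent, after exponentiation, to $\theta(p_s) > p_s^{1/1.007662}$. Since $1/1.007662 > 0.99240$, it suffices to establish the concrete lower bound
\begin{equation*}
\frac{\theta(p_s)}{p_s} \ge p_s^{-0.00760}.
\end{equation*}

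For $s > 30457$ we have $p_s \ge p_{30458}$, a number of order $3.56 \times 10^5$. In this range, explicit bounds of the form $\theta(x) \ge x(1 - \varepsilon(x))$ with $\varepsilon(x)$ small and non-increasing are available from \cite{1} (for instance estimates of type $\varepsilon(x) = 1/(2\log^2 x)$ on a suitable range, which at $x = p_{30458}$ is already well below $0.004$). Taking logarithms in any such bound gives
\begin{equation*}
\frac{\log p_s}{\log \theta(p_s)} \le \frac{1}{1 + \log(1 - \varepsilon(p_s))/\log p_s},
\end{equation*}
and since the right-hand side is a decreasing function of $p_s$ whenever $\varepsilon$ is non-increasing and $\log p_s$ is increasing, its maximum over $s > 30457$ is attained at $s = 30458$. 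The lemma therefore reduces to the single verification of the inequality at the prime $p_{30458}$.

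At that boundary I would either apply the Dusart estimate directly at $x = p_{30458}$, or simply compute $\theta(p_{30458}) = \sum_{j=1}^{30458}\log p_j$ with the same kind of prime-table computation already implicit in Definition \ref{D.QK}. Either way the resulting ratio sits just below $1.007662$; the specific constant in the statement appears to have been calibrated so that equality is approached, up to rounding, precisely at this boundary. This is exactly where the main obstacle lies: because $1.007662$ is so close to $1$, a crude Chebyshev bound of the form $\theta(x) \ge 0.89 x$ is useless, and one has to use a refined Dusart estimate of order $|\theta(x) - x| \ll x/\log^k x$ valid throughout $[p_{30458},\infty)$. Once a sufficiently sharp explicit bound is in hand, the monotonicity reduction above and the single numerical check at $s = 30458$ finish the argument.
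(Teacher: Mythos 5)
Your proposal is correct in outline but follows a genuinely different route from the paper. The paper does not exponentiate the inequality at all: it introduces the auxiliary sequence $j(k)$ (the inverse of $x/\log x$, so $p_s<j(s)$), pairs it with the sequence $v(k)$ from Definition \ref{D.QK} (so $v(s)<p_s$ for $s\ge 30457$), bounds $\theta(p_s)$ from below by $\sum_{k\le s}\log v(k)$ via a telescoping argument on the increments $j(n+1)-j(n)$ and $v(n+1)-v(n)$, and extracts the constant $1.007662$ as $\log j(30457)/\log(\theta(p_{30456})+\log v(30457))$ with $j(30457)\approx 392278$. Your reduction --- exponentiate to $\theta(p_s)>p_s^{1/1.007662}$, invoke an explicit lower bound $\theta(x)\ge x(1-\varepsilon(x))$ with $\varepsilon$ non-increasing, and observe that the resulting majorant $1/(1+\log(1-\varepsilon(p_s))/\log p_s)$ is decreasing in $s$, so only the boundary $s=30458$ needs checking --- is simpler, self-contained, and avoids the $v$/$j$ machinery entirely (and, incidentally, sidesteps the paper's delicate passage from a bound on $p_s/\theta(p_s)$ to one on $\log p_s/\log\theta(p_s)$). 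One factual correction, which works in your favour: the constant is \emph{not} calibrated so that equality is nearly attained at the boundary. At $s=30458$ one has $\theta(p_{30458})\approx 355711$ against $p_{30458}\approx 3.56\times 10^{5}$, so the true ratio there is about $1.00007$; the slack up to $1.007662$ comes from the paper's use of $j(30457)$, which overshoots $p_{30457}=356023$ by roughly $36000$. Consequently your worry that a refined Dusart estimate is indispensable is unfounded: since $p_s^{-(1-1/1.007662)}<0.908$ for $p_s\ge 3.56\times 10^{5}$, any classical bound giving $\theta(x)\ge 0.92x$ in this range (e.g.\ Rosser--Schoenfeld's $\theta(x)>x(1-1/\log x)$ for $x\ge 41$, available in the paper's own reference \cite{2}, which yields a ratio below $1.0066$ here) closes the argument; only something as crude as $\theta(x)\ge 0.89x$ falls just short.
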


\begin{proof}
For any positive integer $k$, let $j(k)$ be the highest $y$ such that $y/\log{y} = k$. Then $j(s) > p_s$. Recall that the lowest $k$ for which $v(k) \ne p_k$ is $30457$, and that $v(s) < p_s$. We have I, below.
\vspace{\baselineskip}

\noindent I. For all $n \ge 30457$, the ratios $v(n)/j(n)$, $(v(n+1)-v(n))/(j(n+1)-j(n))$ and $\log(j(n+1))/(j(n+1)-j(n))$ are all strictly increasing to one. 
\vspace{\baselineskip}

We have
\begin{align}
	\notag 1.084175 \approx &\ \frac{j(30458)-j(30457)}{\log{v(30458)}}\\
	\notag <&\   \frac{j(30457)}{\theta(p_{30456})+\log{v(30457)}}\\
	\approx &\ 1.102878. \label{-TTL-}
\end{align} % !!
Since, for each $t \in \{j,v\}$, $t(s) = \sum_{k=1}^s (t(k)-t(k-1))$, combining I and \eqref{-TTL-} gives
\begin{align}
	\frac{p_s}{\theta(p_s)}  <&\ \frac{p_s}{\sum_{k=1}^s \log{v(k)}}\\
	<&\ \frac{j(s)}{\sum_{k=1}^s \log{v(k)}} \\
	<&\   \frac{j(30457)}{\theta(p_{30456})+\log{v(30457)}}.
\end{align}
Here, $j(30457) \approx 392277.800878$, $j(30458) \approx 392291.764798$ and $v(30457) \approx 356003.455995$ and $\theta(p_{30456}) \approx 355685.674752$. Therefore,
\begin{align}
	\notag \frac{\log{p_s}}{\log{\theta(p_m)}}  	<&\   \frac{\log{j(30457)}}{\log(\theta(p_{30456})+\log{v(30457)})}\\
	\approx &\ 1.007662 
\end{align}
approximating upwards, whence we have \eqref{-TWL-}.
\end{proof}

\subsection{Proof of Theorem \ref{Th4}}

 \begin{proof}
Our proof may take I in the proof of Theorem \ref{Th3} for its introduction, with the following added. It is a result of Nicolas \cite{4} that if, for all $k \ge 2$,
\begin{align}
	\frac{N_k}{\phi(N_k) \log \log N_k} >\ e^{\gamma} \label{-GBI-}
\end{align}
where $N_k = \prod_{j=1}^k p_j$ and $\gamma$ is the Euler-mascheroni constant, the Riemann hypothesis is true.
% Conversely, if the Riemann hypothesis is false, \eqref{-GBI-} holds for infinitely many $k$ and is false for infinitely many $k$ [Th.2(b)]. 
Therefore, if the Riemann hypothesis is true, we have, for all $n$,
\begin{align}
\notag 	\frac{1}{\log\left(\log{\prod_{j=1}^n p_j} \right)\prod_{j=1}^n \left(1-\frac{1}{p_j}\right)} =&\  
	\frac{1}{\log\left(\sum_{j=1}^n \log{p_j} \right) \prod_{j=1}^n \left(1-\frac{1}{p_j}\right)}\\
	>&\ e^{\gamma}. \label{-LJL-}
\end{align}
Let $t_x$ be any real number for which for all $y>x$ we have $t_x > \log(\sqrt{y})/\log{\theta(\sqrt{y})}$. Then since (as mentioned earlier), for all $x \ge 17$, $\pi(x) < x/\log{x}$, it follows by \eqref{-LJL-} that
\begin{align}
	\pi(y) >\  \frac{y e^{\gamma}}{2t_x}\prod_{j=1}^{\pi(y)} \left(1-\frac{1}{p_j}\right). \label{-PDX-}
\end{align}
Here, the coefficient 'two' in the denominator of the first quotient, is found by the fact that $\log{y} = 2 \log{\sqrt{y}}$. Hence \eqref{-PDX-} is found by a known lower bound on the prime count in the way that, through the Mertens theorem combined with the Prime Number theorem, whereby $\theta(x) \sim x$ and $\pi(x) \sim x/\log{x}$, we also have 
\begin{align}
\notag 	\lim_{x \to \infty} \frac{\pi(x)}{\frac{xe^{\gamma}}{2}\prod_{j=1}^{\pi(x)} \left(1-\frac{1}{p_j}\right)} =\ 1.
\end{align}
For $n$ such that $p_n \ge x$, the fact that $v(n) \le p_n$ thereby implies that
\begin{align}
	\pi(v(n)^2) >\  \frac{ v(n)^2 e^{\gamma}}{2t_x}\prod_{j=1}^{n} \left(1-\frac{1}{v(j)}\right). 
\end{align} % !!
Using, for convenience, $v(n)^2/(2\log{v(n)^2})$ as a lower bound on $\pi(v(n)^2/2)$, we thereby have
\begin{align}
	\frac{v(n)^2\prod_{j=1}^{n} \left(1-\frac{1}{v(j)}\right)}{2}-\pi\left(\frac{v(n)^2}{2}\right) <\  \frac{1}{2} v(n)^2\left(1-\frac{ e^{\gamma}}{2t_x}\right)\prod_{j=1}^{n} \left(1-\frac{1}{v(j)}\right)-n \label{-JJN-}
\end{align}
where the $-n$ term on the right side is found by the fact that the first $n$ primes are not coprime to $\prod_{j=1}^n p_j$. Through Lemma \ref{-PMT-}, we now substitute $t_x = 1.007662$, with $x=v(30457)^2$, thence to combine \eqref{-JJN-} and Theorem \ref{Th1} for $I=[1+i, \lfloor v(n)^2/2\rfloor+i]$ where $i$ is any integer, and $J=P(n)$ to give the following. First, for Lemma \ref{-MMT-}, we may now substitute 
\begin{align}
	u_{[1, \lfloor v(n)^2/2\rfloor], n} =\ \frac{\frac{1}{2} v(n)^2\left(1-\frac{ e^{\gamma}}{2\times 1.007662}\right) + \frac{5n^2}{8}}{\frac{1}{2} v(n)^2\prod_{j=1}^{30457} \left(1-\frac{1}{v(j)}\right)}. 
\end{align} % !!
Here, the $-n$ term that appears in \eqref{-JJN-} becomes superfluous to consideration, since $u_{[1, \lfloor v(n)^2/2\rfloor], n}$ is an upper bound on the number of integers that are not coprime to $\prod_{j=1}^n p_k$. The $5n^2/8$ is found through Theorem \ref{Th1}.
 Second, on the above substitution we have, for $r$ as in Lemma \ref{-MMT-},
\begin{align}
\notag & \frac{v(30457)^2}{2}\left(1-2	u_{[1, \lfloor v(30457)^2/2\rfloor], n}\right)\prod_{j=1}^{30457}\left(1-\frac{1}{v(j)} \right)\prod_{\substack{1 \le j \le n\\ p_j \nmid r}}^{30457}\left(1-\frac{1}{v(j)-1} \right)\\
\notag & \ \  \ \ \    \ge \ \frac{v(30457)^2}{2}\left(1-2	u_{[1, \lfloor v(30457)^2/2\rfloor], n}\right)\prod_{j=1}^{30457}\left(1-\frac{1}{v(j)} \right)\prod_{j=2}^{30457}\left(1-\frac{1}{v(j)-1} \right) \\
& \ \  \ \ \   \approx   56,611,211.95. \label{-VJI-}
%56611211.95438
%\label{-VWV-}
\end{align}
The first relation follows through Lemma \ref{-BM-}, specifically for the inequality in \eqref{-GS}, for $P(30457)=J$. Then, for $z_n$ as in the Proof of Theorem \ref{Th3} for $n$ as current, the facts that we may increase $z_n$ by increments of one and that $z_n > p_n > v(n)$ and that $(1-1/v(n))(1-1/(v(n)-1))$ is increasing, all combined with Lemma \ref{-OLQ-}, implies the following. Combining \eqref{-VJI-} with Theorem \ref{Th3} for $s_n = 1-e^{\gamma}/(2\times 1.007662) - n/z_n$, gives I, below, for $i=2z_n$.
\vspace{\baselineskip}

\noindent I. If the Riemann hypothesis is true, then all even numbers, $i$, such that $\pi(\sqrt{i}) \ge 30457$ are the sum of two primes. 
\vspace{\baselineskip}

All even numbers, up to values greater than $p_{30457}^2= 126,752,376,529$, which is less than $10^{12}$, have been shown to be the sum of two primes by, for example, Richstein \cite{5}, who verified the Goldbach conjecture for all even numbers up to $4 \times 10^{14}$. Since Theorem \ref{Th4} is, subject to the Riemann hypothesis being true, a statement of the Goldbach conjecture, it follows by I that if the Riemann hypothesis is true, the Goldbach conjecture is also true, which completes the proof.
\end{proof}

\subsection{Conclusion}
We have shown through a single basic method that the Twin Primes is true, and the Goldbach conjecture is true subject to the Riemann hypothesis being true.
%	
% \begin{theorem}\label{Th4}
%The Goldbach conjecture is true.
%\end{theorem}
%%
%

%
%\begin{theorem}\label{Th5}
%Every integer $w > 3$ can be written as the average of two positive prime numbers.
%\end{theorem}

% [NOTE TO DF: Is all the above OK, stylewise?]

\vspace{\baselineskip}

% ** THink about the 'Z' author.

% ** DIAGRAMS?


\begin{thebibliography}{2}

\bibitem{1} P. Dusart. Autour de la Fonction Qui Compte le Nombre de Nombres Premiers. Doctoral thesis for l'Universit\'e\ de Limoges, 1998. 


\bibitem{2} J. B. Rosser and L. Schoenfeld, Approximate formulas for some functions of prime numbers, Ill. J. Math., 6, 1962, pp.~64--94. 

\bibitem{3}
 J. Havil. Exploring Euler's Constant. Princeton, NJ: Princeton University Press, 2003.


\bibitem{4} J. Nicolas, Petites valeurs de la fonction d'Euler, J. Number Theory 17, 3, 1983, pp.~375--388.


\bibitem{5} Richstein, J. Verifying the Goldbach Conjecture up to $4\times10^{14}$. Math. Comput. 70.
% 1745-1750, 
2001.
% **** CHECK JOURNAL's REQUIREMENT+++_!!!

% CHECK THIS, above, for Mertens****
%

\end{thebibliography}
 \end{document}